\newtheorem{thm}{Theorem}[section]
\newtheorem{rmk}{Remark}[section]
\crefname{hypothesis}{Hypothesis}{Hypotheses}
\title{Linearized inverse Schr\"{o}dinger potential problem at a large wavenumber\thanks{Submitted to the editors DATE.
\funding{This research is supported in part by the Emylou Keith and Betty Dutcher Distinguished Professorship and the NSF grant DMS 15-14886.
Shuai Lu is supported by NSFC No.11522108, Shanghai Municipal Education Commission No.16SG01.
Boxi Xu is supported by NSFC No.11801351.}}}
\author{
  Victor Isakov\thanks{Department of Mathematics, Statistics, and Physics, Wichita State University, Wichita, KS 67260-0033, USA (Email: victor.isakov@wichita.edu).}
  \and
  Shuai Lu\thanks{School of Mathematical Sciences, Fudan University, No.220 Road Handan, Shanghai 200433, China (Email: slu@fudan.edu.cn).}
  \and
  Boxi Xu\thanks{Corresponding author. School of Mathematics, Shanghai University of Finance and Economics, No.777 Road Guoding, Shanghai 200433, China (Email: xu.boxi@mail.sufe.edu.cn).}
}
\begin{document}

\maketitle

% REQUIRED
\begin{abstract}
We investigate recovery of the (Schr\"{o}dinger) potential function from many boundary measurements at a large wavenumber. By considering such a linearized form, we obtain a H\"{o}lder type stability which is a big improvement over a logarithmic stability in low wavenumbers. Furthermore we extend the discussion to the linearized inverse Schr\"{o}dinger potential problem with attenuation, where an exponential dependence of the attenuation constant is traced in the stability estimate.
Based on the linearized problem, a reconstruction algorithm is proposed aiming at the recovery of the Fourier modes of the potential function. By choosing the large wavenumber appropriately, we verify the efficiency of the proposed algorithm by several numerical examples.
\end{abstract}

% REQUIRED
\begin{keywords}
Stability estimate, Inverse boundary value problem, Schr\"{o}dinger potential problem
\end{keywords}

% REQUIRED
\begin{AMS}
35R30, 65N21
\end{AMS}

%--------------------------------------------------------------------------------

\section{Introduction}\label{se_intro}

We consider a linearized problem of recovering the potential function $c: = c(x)$ in the Schr\"{o}dinger equation
\begin{equation}\label{eqn:mainprob}
\left\{
\begin{aligned}
- \Delta u - (k^{2} - c) u &= 0 & & \text{in\ } \Omega \subset \mathbb{R}^{n},\\
u &= g_{0} & & \text{on\ } \partial \Omega,
\end{aligned}
\right.
\end{equation}
from many boundary measurements at a large wavenumber $k$ in the dimension $n \geqslant 2$.
As the data, we use a linearized form of the standard Dirichlet-to-Neumann (DtN) map
\begin{equation*}
\Lambda: g_{0} \mapsto g_{1} := \partial_{\nu} u \quad \text{on\ } \partial \Omega
\end{equation*}
which will be specified below.

Reconstruction of the Schr\"{o}dinger potential function $c(x)$ at a fixed wavenumber $k = 0$ in (\ref{eqn:mainprob}) retrospects back to the original inverse conductivity problem, proposed by Calder\'{o}n, arising in electrical impedance tomography where uniqueness for the linearization can be proven by using the complex exponential solutions \cite{C1980}. Later on, the fundamental work by Sylvester and Uhlmann proved the global uniqueness for the inverse potential problem in three and higher dimensions by constructing almost complex exponential solutions, which also yields the global uniqueness of the inverse conductivity problem \cite{SU1987}. In two dimensions, recent work by Bukhgeim \cite{B2007} demonstrated uniqueness, at any fixed $k$, by introducing stationary phase methods. At $k = 0$, a logarithmic stability estimate of recovering the potential function is given by Alessandrini \cite{Alessandrini1988} and is further proven to be optimal by Mandache \cite{M2001}. As for the numerical reconstruction of the potential function (or the conductivity function) at a fixed wavenumber, we refer to \cite{DS1994} and recent work \cite{HLSST2016}. We shall mention that the inverse medium problem in \cite{BL2005} also highly relates with the current work.

Recently it has been widely observed in different inverse boundary value problems that the stability estimate improves with growing wavenumbers (or energy) both analytically and numerically. The increasing stability in the inverse boundary value problem for the Schr\"{o}dinger equation (\ref{eqn:mainprob}) was firstly observed within different ranges of the wavenumbers in \cite{I2011}. In \cite{IW2014}, the authors considered the increasing stability on inverse boundary value problems for the Schr\"{o}dinger equation with attenuation where a linear exponential dependence on the attenuation constant is established. We note that all the above mentioned increasing stability results on inverse Schr\"{o}dinger potential problems are considered in three and higher dimensions under different a priori regularity assumptions.

Another multifrequency set-up uses single observation for multiple wavenumbers. To have an overview of such results, we recommend a recent review paper \cite{BLLT2015} which nicely summarizes the theoretical and numerical evidences verifying the increasing stability in inverse medium and source problems for acoustic Helmholtz equations and Maxwell equations.

In this paper, we investigate a linearized form of the inverse Schr\"{o}dinger potential problem (\ref{eqn:mainprob}) with a large wavenumber in two and higher dimensions. Such a linearization is carried out at a zero potential function which is reasonable if $c$ is sufficiently small compared with the squared wavenumber $k^{2}$. Noticing that the squared wavenumber $k^{2}$ may be large, we are allowed to reconstruct a potential function of moderate amplitude.
In Section \ref{se_sta} we introduce the linearized problem and obtain an (increasing) H\"{o}lder-type stability estimate for the linearized inverse Schr\"{o}dinger potential problem with a large wavenumber by using bounded complex exponential solutions. We extend, in Section \ref{se_atten}, the discussion to the inverse Schr\"{o}dinger potential problem with attenuation. An exponential dependence on the attenuation constant in the stability estimate is observed. A novel reconstruction algorithm is proposed to recover the Fourier modes of the potential function in Section \ref{se_algo} based on the Calder\'{o}n's method. By choosing the large wavenumber appropriately, we show various numerical examples confirming the efficiency of the proposed algorithm in Section \ref{se_numer}. Finally a conclusion Section \ref{se_con} ends the manuscript with further prospects.

%--------------------------------------------------------------------------------

\section{Increasing stability in the linearized inverse Schr\"{o}dinger potential problem at the large wavenumber}\label{se_sta}

We recall that the original problem, which initializes the current work, is to find the Schr\"{o}dinger potential function $c = c(x)$ defined in a bounded domain $\Omega$ in the following problem
\begin{equation}\label{eqn:prob}
(I) \quad \left\{
\begin{aligned}
- \Delta u - (k^{2} - c) u &= 0 & & \text{in\ } \Omega \subset \mathbb{R}^{n},\\
u &= g_{0} & & \text{on\ } \partial \Omega,
\end{aligned}
\right.
\end{equation}
from the knowledge of the Dirichlet-to-Neumann (DtN) map
\begin{equation}\label{eqn:dtn}
\Lambda: g_{0} \mapsto g_{1} := \partial_{\nu} u \quad \text{on\ } \partial \Omega.
\end{equation}
We assume that $k^2$ is not a Dirichlet eigenvalue for the Laplace operator in $\Omega$.

If we assume that $c$ is small (or $k$ is large), we can justify the linearization of the Schr\"{o}dinger equation. More precisely, let $u_{0}$, $u_{1}$ solve the following sub-problems
\begin{subequations}
\begin{align}
(I_{0}) \quad & \left\{
\begin{aligned}
- \Delta u_{0} - k^{2} u_{0} &= 0 \quad & & \text{in\ } \Omega,\\
u_{0} &= g_{0} & & \text{on\ } \partial \Omega,
\end{aligned}
\right. \label{eqn:sub_I_0}\\
(I_{1}) \quad & \left\{
\begin{aligned}
- \Delta u_{1} - k^{2} u_{1} &= - c u_{0} & & \text{in\ } \Omega,\\
u_{1} &= 0 & & \text{on\ } \partial \Omega,
\end{aligned}
\right. \label{eqn:sub_I_1}
\end{align}
\end{subequations}
then the solution $u$ of the original problem (\ref{eqn:prob}) is
\begin{equation*}
u = u_{0} + u_{1} + \cdots
\end{equation*}
where the remaining "$\cdots$" are "higher" order terms. The linearzied DtN map $\Lambda^{\prime}$ of $\Lambda$ in (\ref{eqn:dtn}) is defined accordingly as
\begin{equation}\label{eqn:dtn_p}
\Lambda^{\prime} : g_{0} \mapsto \partial_{\nu} u_{1} \quad \text{on\ } \partial \Omega.
\end{equation}

Multiplying both sides of the sub-problem (\ref{eqn:sub_I_1}) with a test function $v \in H^{1}(\Omega)$ satisfying the equation
\begin{equation*}
- \Delta v - k^{2} v = 0 \quad \text{in\ } \Omega,
\end{equation*}
we obtain
\begin{equation}\label{eqn:integral}
\int_{\Omega} c \, u_{0} v = \int_{\partial \Omega} \left( \partial_{\nu} u_{1} \right) v.
\end{equation}

Without loss of generality we may assume that $0 \in \Omega$. Let $D = 2 \sup\limits_{x \in \Omega} \left| x \right|$ and $\epsilon$ be the operator norm of $\Lambda^{\prime} : H^{\frac{1}{2}}(\partial \Omega) \to H^{-\frac{1}{2}}(\partial \Omega)$.
We present the main stability estimate below:
\begin{thm}\label{thm_1}
Let $D \leqslant 1$, $\left\| c \right\|_{H^{1}(\Omega)} \leqslant M_{1}$, and $k > 1$, $\epsilon < 1$, then the following estimate holds true
\begin{equation*}
\left\| c \right\|_{L^{2}(\Omega)}^{2}
\leqslant C(\Omega) \left( k^{n+4} + E^{n+4} \right) \epsilon^{2}
+ C(\Omega) E^{n+2} \left( \epsilon + \epsilon^{3} \right)
+ \frac{M_{1}^{2}}{1 + E^{2} + 3k^{2}}
\end{equation*}
for the linearized system (\ref{eqn:sub_I_0})-(\ref{eqn:sub_I_1}) with $E = -\ln \epsilon$ and the constant $C(\Omega)$ depending on the domain $\Omega$.
\end{thm}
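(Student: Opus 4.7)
The plan is to follow the Calder\'on/Sylvester--Uhlmann strategy of reconstructing the Fourier transform $\hat c(\xi)$ of the unknown potential from boundary data through the integral identity \eqref{eqn:integral}, then to combine Plancherel's theorem with the a priori $H^1$ bound on $c$. The essential novelty afforded by the large wavenumber regime is that \emph{bounded} (purely imaginary) complex exponential solutions of the free Helmholtz equation are available for every Fourier mode with $|\xi|\le 2k$; this is precisely what will convert the classical Mandache-type logarithmic obstruction into the asserted H\"older-type estimate.

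Concretely, for each $\xi\in\mathbb{R}^n$ I would choose $\zeta_1,\zeta_2\in\mathbb{C}^n$ satisfying $\zeta_j\cdot\zeta_j=-k^2$ and $\zeta_1+\zeta_2=-i\xi$, so that $u_0(x)=e^{\zeta_1\cdot x}$ and $v(x)=e^{\zeta_2\cdot x}$ solve $-\Delta w-k^2 w=0$ in $\Omega$ and $u_0(x)v(x)=e^{-i\xi\cdot x}$. Decomposing $\zeta_j=\eta_j+i\tau_j$ and parametrizing by $s:=|\eta_j|$, the two constraints force $s^2\ge\max(0,|\xi|^2/4-k^2)$; I would pick equality, producing $s=0$ (bounded exponents) for $|\xi|\le 2k$ and $s=\sqrt{|\xi|^2/4-k^2}$ otherwise. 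Then $|\zeta_j|^2=2s^2+k^2$, $\sup_\Omega|e^{\zeta_j\cdot x}|\le e^{sD/2}$, and, using the operator bound $\|\Lambda'\|_{H^{1/2}\to H^{-1/2}}\le\epsilon$ together with the trace theorem and the $H^1(\Omega)$ estimate of exponential solutions, identity \eqref{eqn:integral} yields a pointwise bound of the form
\begin{equation*}
 |\hat c(\xi)|^{2}\le C(\Omega)\,\epsilon^{2}\,(1+k^{2}+s^{2})^{2}\,e^{2sD}.
\end{equation*}

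To finish, I would apply Plancherel to $\|c\|_{L^2(\Omega)}^{2}$ and split $\mathbb{R}^n$ into three regions at the cut-off $\rho^2=E^2+3k^2$, a choice forced by the denominator in the last term of the statement. The tail $|\xi|>\rho$ is controlled by the a priori bound $\int_{|\xi|>\rho}|\hat c|^{2}\le M_1^{2}/(1+\rho^{2})$, which supplies the last term. The low-frequency core $|\xi|\le 2k$ uses bounded exponentials ($s=0$), contributing of order $k^{n+4}\epsilon^{2}$. On the middle shell $2k<|\xi|\le\rho$, the choice of $\rho$ guarantees $s\le E/2$ so that $e^{2sD}\le e^{E}=1/\epsilon$ (invoking $D\le 1$), and one has the exact identity $1+k^{2}+s^{2}=1+|\xi|^{2}/4$ throughout this region; integrating the resulting polynomial-times-$\epsilon$ integrand over a shell of radius $\le\rho\le 2E$ (note $E\ge k$ is forced on a nonempty middle shell) delivers the remaining $E^{n+4}\epsilon^{2}$ and $E^{n+2}(\epsilon+\epsilon^{3})$ pieces. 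Summing the three contributions gives the stated inequality.

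I expect the principal obstacle to be the accounting in the final step: while the low- and high-frequency pieces are essentially one-line bounds, the middle shell contribution requires careful bookkeeping to extract exactly the exponents $n+4$ and $n+2$ in $E$ (and the particular $\epsilon$-powers $\epsilon^{2},\epsilon,\epsilon^{3}$), separately tracking which factors of $(1+|\zeta_j|)$ come from the trace inequality, which from the shell volume, and which from the identity $1+k^{2}+s^{2}=1+|\xi|^{2}/4$. A subsidiary subtlety is verifying that $\rho^{2}=E^{2}+3k^{2}$ is indeed the choice simultaneously producing the clean high-frequency denominator and keeping $e^{2sD}$ controlled by $1/\epsilon$ via the hypothesis $D\le 1$; once this is fixed, the rest is routine polynomial--exponential estimation.
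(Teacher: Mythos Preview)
Your overall strategy matches the paper's: Calder\'on--Faddeev exponentials with $\zeta+\zeta^*=\xi$, $\zeta\cdot\zeta=k^2$, followed by a Plancherel split into low, middle and tail frequencies. Your cut-off $\rho^2=E^2+3k^2$ differs from the paper's $\rho^2=E^2/D^2+4k^2$, but both deliver the stated tail term $M_1^2/(1+E^2+3k^2)$; and your observation that the middle shell $2k<|\xi|\le\rho$ is empty precisely when $k>E$ elegantly replaces the paper's explicit case split $k>E$ versus $k\le E$.

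There is, however, a genuine quantitative gap in the middle-shell estimate. From the sup-bound $\sup_\Omega|e^{\zeta_j\cdot x}|\le e^{sD/2}$ you obtain $|\hat c(\xi)|^2\le C(\Omega)\,\epsilon^2(1+k^2+s^2)^2e^{2sD}$; bounding $e^{2sD}\le e^{E}=\epsilon^{-1}$ and integrating the polynomial $(1+|\xi|^2/4)^2$ over a shell of radius at most $2E$ then produces a term of order $E^{n+4}\epsilon$, not $E^{n+2}(\epsilon+\epsilon^3)$. Since $E^{n+4}\epsilon$ dominates both $E^{n+4}\epsilon^2$ and $E^{n+2}\epsilon$ as $\epsilon\to 0$, the stated inequality does not follow from your bound, and no amount of bookkeeping repairs this. (The appearance of $\epsilon^3$ is also unexplained by a pure sup-estimate.)

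The missing ingredient is the sharper $L^2$ estimate the paper uses in place of your sup-bound:
\[
\int_\Omega e^{-2s\,e_2\cdot x}\,dx \;\le\; \operatorname{Vol}_{n-1}\Omega\;\frac{e^{Ds}-e^{-Ds}}{2s}\,,
\]
obtained by integrating the exponential in the $e_2$-direction rather than bounding it pointwise. After squaring, this puts $\dfrac{(e^{Ds}-e^{-Ds})^2}{4s^2}$ in the Fourier-coefficient bound; the monotonicity of $y\mapsto (e^y-e^{-y})/y$ then lets one evaluate at the endpoint $Ds\le E/2$, yielding $\dfrac{D^2}{E^2}(e^E+e^{-E}-2)$. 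The extra factor $E^{-2}$ is exactly what converts the middle-shell contribution from $E^{n+4}\epsilon$ to $E^{n+2}(\epsilon+\epsilon^3-2\epsilon^2)\le E^{n+2}(\epsilon+\epsilon^3)$. Insert this refinement and the remainder of your plan goes through essentially unchanged.
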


\begin{proof}
The proof is based on the complex exponential solutions suggested by Calder\'{o}n \cite{C1980} and Faddeev \cite{F1966}. Let $\xi \in \mathbb{R}^{n}$ and $\zeta, \zeta^{*} \in \mathbb{C}^{n}$ with $\xi,  \zeta, \zeta^{*} \neq 0$. We consider the following solutions
\begin{equation}\label{eqn:uvcgo}
u_{0}(x) = e^{\mathbf{i} \zeta \cdot x}, \quad
v(x) = e^{\mathbf{i} \zeta^{*} \cdot x}.
\end{equation}
To select $\zeta$ and $\zeta^{*}$, we introduce an orthonormal base $\left\{ e_{1} := \frac{\xi}{\left| \xi \right|}, e_{2}, \cdots, e_{n} \right\}$ of $\mathbb{R}^{n}$ and let
\begin{equation*}
\zeta := \frac{\left| \xi \right|}{2} e_{1} + \sqrt{k^{2} - \frac{\left| \xi \right|^{2}}{4}} e_{2}, \quad
\zeta^{*} := \frac{\left| \xi \right|}{2} e_{1} - \sqrt{k^{2} - \frac{\left| \xi \right|^{2}}{4}} e_{2}
\end{equation*}
where $\sqrt{k^{2} - \frac{\left| \xi \right|^{2}}{4}} = \mathbf{i} \sqrt{\frac{\left| \xi \right|^{2}}{4} - k^{2}}$ if $k < \frac{\left| \xi \right|}{2}$. For brevity, we denote $\Xi := \sqrt{\frac{\left| \xi \right|^{2}}{4} - k^{2}}$.
Then $u_{0}(x) v(x) = e^{\mathbf{i} \xi \cdot x}$ and from the equality (\ref{eqn:integral})
we derive
\begin{equation}\label{eqn:integral_F}
\mathcal{F}[c](\xi)
:= \int_{\Omega} c(x) e^{\mathbf{i}\xi \cdot x} \,\mathrm{d}x
= \int_{\partial \Omega} \left( \partial_{\nu} u_{1} \right) v.
\end{equation}
Here $\mathcal{F}[\cdot]$ denotes the Fourier transform.

Observe that if $k \geqslant \frac{\left| \xi \right|}{2}$, then the norms of exponential solutions in (\ref{eqn:uvcgo}) are bounded as follow
\begin{equation}\label{eqn:xi_1}
\left\| u_{0} \right\|_{H^{1}(\Omega)}^{2}
= \left\| v \right\|_{H^{1}(\Omega)}^{2}
= \int_{\Omega} \left( 1 + \left| \zeta \right|^{2} \right) \,\mathrm{d}x
\leqslant \left( 1 + k^{2} \right) \operatorname{Vol}_{n} \Omega,
\end{equation}
where $\operatorname{Vol}_{n} \Omega$ is the volume of $\Omega \subset \mathbb{R}^{n}$. If $k < \frac{\left| \xi \right|}{2}$, then
\begin{equation}\label{eqn:xi_2}
\begin{aligned}
\left\| u_{0} \right\|_{H^{1}(\Omega)}^{2}
= \left\| v \right\|_{H^{1}(\Omega)}^{2}
&= \left( 1 + k^{2} \right) \int_{\Omega} e^{- 2 \Xi \, e_{2} \cdot x} \,\mathrm{d}x \\
&\leqslant \left( 1 + k^{2} \right) \operatorname{Vol}_{n-1} \Omega \,\frac{e^{D \Xi} - e^{-D \Xi}}{2 \Xi},
\end{aligned}
\end{equation}
where $\operatorname{Vol}_{n-1} \Omega = \sup\limits_{\Omega^{\prime}} \{ \operatorname{Vol}_{n-1} \Omega^{\prime} \}$ over all $(n-1)$-dimensional orthonormal projections $\Omega^{\prime}$ of $\Omega$.

By the trace theorem, there hold
\begin{equation}\label{eqn:trace}
\left\| u_{0} \right\|_{H^{\frac{1}{2}}(\partial \Omega)}
\leqslant C(\Omega) \left\| u_{0} \right\|_{H^{1}(\Omega)}, \quad
\left\| v \right\|_{H^{\frac{1}{2}}(\partial \Omega)}
\leqslant C(\Omega) \left\| v \right\|_{H^{1}(\Omega)}.
\end{equation}
So from (\ref{eqn:integral_F}) and (\ref{eqn:trace}), we have
\begin{equation*}
\begin{aligned}
\left| \mathcal{F}[c](\xi) \right|^{2}
&\leqslant \left\| \partial_{\nu} u_{1} \right\|_{H^{-\frac{1}{2}}(\partial \Omega)}^{2} \left\| v \right\|_{H^{\frac{1}{2}}(\partial \Omega)}^{2} \\
&\leqslant \epsilon^{2} \left\| u_{0} \right\|_{H^{\frac{1}{2}}(\partial \Omega)}^{2} \left\| v \right\|_{H^{\frac{1}{2}}(\partial \Omega)}^{2} \\
&\leqslant \epsilon^{2} C^{4}(\Omega) \left\| u_{0} \right\|_{H^{1}(\Omega)}^{2} \left\| v \right\|_{H^{1}(\Omega)}^{2}.
\end{aligned}
\end{equation*}
Hence, due to (\ref{eqn:xi_1}) and (\ref{eqn:xi_2}),
\begin{equation*}
\left| \mathcal{F}[c](\xi) \right|^{2}
\leqslant C^{4}(\Omega) \left( 1 + k^{2} \right)^{2} \left( \operatorname{Vol}_{n} \Omega \right)^{2} \epsilon^{2},
\quad \text{if\ } k \geqslant \frac{\left| \xi \right|}{2},
\end{equation*}
and
\begin{equation}\label{eq_nminus1bound}
\left| \mathcal{F}[c](\xi) \right|^{2}
\leqslant C^{4}(\Omega) \left( 1 + k^{2} \right)^{2} \left( \operatorname{Vol}_{n-1} \Omega \right)^{2} \frac{\left( e^{D \Xi} - e^{-D \Xi} \right)^2}{4 \Xi^{2}} \epsilon^{2},
\quad \text{if\ } k < \frac{\left| \xi \right|}{2}.
\end{equation}

Let $E := - \ln \epsilon > 0$ and $k > 1$, $\epsilon <1$, we consider two cases
\begin{enumerate}
\item[a)] $k > E$ (i.e. $\epsilon = e^{-E} > e^{-k}$), and
\item[b)] $k \leqslant E$ (i.e. $\epsilon = e^{-E} \leqslant e^{-k}$).
\end{enumerate}

In the case a), we have
\begin{equation}\label{eqn:bound_a}
\begin{aligned}
\left\| c \right\|_{L^{2}(\Omega)}^{2}
&= \int \left| \mathcal{F}[c](\xi) \right|^{2} \,\mathrm{d}\xi
= \int_{k \geqslant \frac{\left| \xi \right|}{2}} \left| \mathcal{F}[c](\xi) \right|^{2} \,\mathrm{d}\xi
+ \int_{k < \frac{\left| \xi \right|}{2}} \left| \mathcal{F}[c](\xi) \right|^{2} \,\mathrm{d}\xi \\
&\leqslant C^{4}(\Omega) \left( 1 + k^{2} \right)^{2} \left( \operatorname{Vol}_{n} \Omega \right)^{2} \sigma_{n} (2k)^{n} \epsilon^{2}
+ \frac{M_{1}^{2}}{1 + (2k)^{2}} \\
&\leqslant C_{1}(\Omega) k^{n+4} \epsilon^{2} + \frac{M_{1}^{2}}{1 + 4 k^{2}}
\leqslant C_{1}(\Omega) k^{n+4} \epsilon^{2} + \frac{M_{1}^{2}}{1 + E^{2} + 3 k^{2}}
\end{aligned}
\end{equation}
where $\sigma_{n}$ is the volume of an unit ball in $\mathbb{R}^{n}$, and the constant $C_{1}(\Omega)$ is defined as $C^{4}(\Omega) \left( \operatorname{Vol}_{n} \Omega \right)^{2} \sigma_{n} 2^{n+2}$.

In the case b), we let $\rho^{2} := \frac{E^{2}}{D^{2}} + 4 k^{2}$ and split
\begin{equation*}
\begin{aligned}
\left\| c \right\|_{L^{2}(\Omega)}^{2}
&= \int_{k \geqslant \frac{\left| \xi \right|}{2}} \left| \mathcal{F}[c](\xi) \right|^{2} \,\mathrm{d}\xi
+ \int_{k < \frac{\left| \xi \right|}{2} < \frac{\rho}{2}} \left| \mathcal{F}[c](\xi) \right|^{2} \,\mathrm{d}\xi
+ \int_{\rho \leqslant \left| \xi \right|} \left| \mathcal{F}[c](\xi) \right|^{2} \,\mathrm{d}\xi \\
&\leqslant C^{4}(\Omega) \left( 1 + k^{2} \right)^{2} \left( \operatorname{Vol}_{n} \Omega \right)^{2} \sigma_{n} (2k)^{n} \epsilon^{2} \\
&\quad + C^{4}(\Omega) \left( 1 + k^{2} \right)^{2} \left( \operatorname{Vol}_{n-1} \Omega \right)^{2} \left( \int_{k < \frac{\left| \xi \right|}{2} < \frac{\rho}{2}} \,\mathrm{d}\xi \right) \frac{D^{2} \left( e^{\frac{E}{2}} - e^{-\frac{E}{2}} \right)^{2}}{E^{2}} \epsilon^{2} \\
&\quad + \frac{M_{1}^{2}}{1 + \rho^{2}}.
\end{aligned}
\end{equation*}
The second term in the above inequality is derived by (\ref{eq_nminus1bound}) and the fact that
\begin{equation*}
\frac{e^{y}-e^{-y}}{y}
= 2 \left( 1 + \frac{y^{2}}{3!} + \cdots + \frac{y^{2n}}{(2n+1)!} + \cdots \right)
\end{equation*}
increases while $y > 0$ and hence
\begin{equation*}
\frac{e^{D \Xi} - e^{-D \Xi}}{2 \Xi}
\leqslant \frac{D\left( e^{\frac{E}{2}} - e^{-\frac{E}{2}} \right)}{E},
\quad {\rm since} \quad  k < \frac{\left| \xi \right|}{2} < \frac{\rho}{2} = \sqrt{\frac{E^{2}}{4D^{2}} + k^{2}}.
\end{equation*}
Observe that, since $k \leqslant E$,
\begin{equation*}
\begin{aligned}
\int_{k < \frac{\left| \xi \right|}{2} < \frac{\rho}{2}} \,\mathrm{d}\xi
&= \sigma_{n} \left[ \rho^{n} - (2k)^{n} \right]
= \sigma_{n} \left[ \left( \frac{E^{2}}{D^{2}} + (2k)^{2} \right)^{\frac{n}{2}} - (2k)^{n} \right] \\
&= \sigma_{n} \frac{E^{n}}{D^{n}} \left[ \left( 1 + \left( 2 k \frac{D}{E} \right)^{2} \right)^{\frac{n}{2}} - \left( 2 k \frac{D}{E} \right)^{n} \right] \\
&\leqslant \sigma_{n} \frac{E^{n}}{D^{n}} \left[ \left( 1 + \left( 2 D \right)^{2} \right)^{\frac{n}{2}} - \left( 2 D \right)^{n} \right].
\end{aligned}
\end{equation*}
Thus
\begin{equation}\label{eqn:bound_b}
\begin{aligned}
\left\| c \right\|_{L^{2}(\Omega)}^{2}
&\leqslant C^{4}(\Omega) \left( \operatorname{Vol}_{n} \Omega \right)^{2} \sigma_{n} 2^{n+2} k^{n+4} \epsilon^{2} \\
&\quad + C^{4}(\Omega) \left( \operatorname{Vol}_{n-1} \Omega \right)^{2} \sigma_{n} 2^{2} k^{4} \frac{E^{n-2}}{D^{n-2}} \left[ \left( 1 + \left( 2 D \right)^{2} \right)^{\frac{n}{2}} - \left( 2 D \right)^{n} \right] \\
&\quad \qquad \cdot \left( e^{E} + e^{-E} - 2 \right) \epsilon^{2} \\
&\quad + \frac{M_{1}^{2}}{1 + \frac{E^{2}}{D^{2}} + 4 k^{2}} \\
&\leqslant C_{1}(\Omega) E^{n+4} \epsilon^{2}
+ C_{2}(\Omega) E^{n+2} \left( \epsilon + \epsilon^{3} \right)
+ \frac{M_{1}^{2}}{1 + \frac{E^{2}}{D^{2}} + 4 k^{2}}
\end{aligned}
\end{equation}
where $C_{2}(\Omega) := C^{4}(\Omega) \left( \operatorname{Vol}_{n-1} \Omega \right)^{2} \sigma_{n} \frac{2^{2}}{D^{n-2}} \left[ \left( 1 + \left( 2 D \right)^{2} \right)^{\frac{n}{2}} - \left( 2 D \right)^{n} \right]$.

Combining case a) $k > E$ with the bound (\ref{eqn:bound_a}) and case b) $k \leqslant E$ with the bound (\ref{eqn:bound_b}), we prove Theorem \ref{thm_1}.
\end{proof}

\begin{rmk}\label{rem_1}
When $\Omega$ is a sphere, constants in the trace theorem are explicit and one can make $C(\Omega)$ in Theorem \ref{thm_1} explicitly.
\end{rmk}

In view of (\ref{eqn:bound_b}) it is obvious that stability is increasing with growing wavenumber $k$, provided $k \leqslant E$. We comment on the increasing stability when $k > E$. In such a case, we have an upper bound estimate (\ref{eqn:bound_a}) of $c$ which is
\begin{equation*}
\begin{aligned}
\left\| c \right\|_{L^{2}(\Omega)}^{2}
\leqslant \omega(k)
:= C_{1}(\Omega) k^{n+4} \epsilon^{2} + \frac{M_{1}^{2}}{4 k^{2}}.
\end{aligned}
\end{equation*}
By an elementary calculus, the upper bound $\omega(k)$ decreases on $(0,k_{*})$ and increases on $(k_{*},\infty)$ where
\begin{equation}\label{eq_kstar}
\begin{aligned}
k_{*} = \left( \frac{M_{1}^{2}}{2 (n+4) C_{1}(\Omega) \epsilon^{2}} \right)^{\frac{1}{n+6}}.
\end{aligned}
\end{equation}

If $k_{*} \leqslant 1$ in (\ref{eq_kstar}), we have
\begin{equation*}
\begin{aligned}
M_{1}^{2} \leqslant 2 (n+4) C_{1}(\Omega) \epsilon^{2}
\end{aligned}
\end{equation*}
and the minimum of $\omega(k)$ is obtained at $k = 1$ such that
\begin{equation*}
\begin{aligned}
\left\| c \right\|_{L^{2}(\Omega)}^{2}
\leqslant \omega(1) = C_{1}(\Omega) \epsilon^{2} + \frac{M_{1}^{2}}{4}
\leqslant C_{1}(\Omega) \left( 3 + \frac{n}{2} \right) \epsilon^{2}.
\end{aligned}
\end{equation*}
If $k_{*} > 1$, then minimum of $\omega(k)$ is obtained at $k = k_{*}$ such that
\begin{equation*}
\begin{aligned}
\left\| c \right\|_{L^{2}(\Omega)}^{2}
\leqslant \omega(k_{*})
= C_{1}^{\frac{2}{n+6}}(\Omega) M_{1}^{2\frac{n+4}{n+6}} \left[ \left( 2(n+4) \right)^{-\frac{n+4}{n+6}} + 2^{-2} \left( 2(n+4) \right)^{\frac{2}{n+6}} \right] \epsilon^{\frac{4}{n+6}}.
\end{aligned}
\end{equation*}

Now we consider the "small" perturbation when $E \geqslant 1$ (i.e., $\epsilon \leqslant e^{-1}$). If $k_{*} \leqslant E$, then minimum of $\omega(k)$ (at $k = E$) is
\begin{equation*}
\begin{aligned}
\left\| c \right\|_{L^{2}(\Omega)}^{2}
& \leqslant \omega(E) = C_{1}(\Omega) E^{n+4} \epsilon^{2} + \frac{M_{1}^2}{4 E^{2}} \\
& \leqslant C_{1}(\Omega) E^{n+4} \epsilon^{2} + C_{1}^{\frac{2}{n+6}}(\Omega) M_{1}^{2\frac{n+4}{n+6}} 2^{-2} \left( 2(n+4) \right)^{\frac{2}{n+6}} \epsilon^{\frac{4}{n+6}}.
\end{aligned}
\end{equation*}
If $k_{*} > E$, then minimum of $\omega(k)$ is at $k = k_{*}$ which is the same as in the case $0 < E < 1$. So in all possible cases above, given a small error (or a perturbation) $\epsilon$ we can choose a large wavenumber $k$ producing a H\"{o}lder stability which is a big improvement over a logarithmic stability at low wavenumbers.

%--------------------------------------------------------------------------------

\section{Linearized inverse Schr\"{o}dinger potential problem at the large wavenumber with attenuation}\label{se_atten}

Furthermore we provide some extended discussion on the linearized inverse Schr\"{o}dinger potential problem with attenuation. The original problem in three (and higher) dimensions is studied in \cite{IW2014} where increasing stability estimates with a linearly exponential dependence of the attenuation constant are obtained for low/high frequencies separately by constructing almost complex exponential solutions and real solutions respectively.

In current section, we investigate the stability estimate recovering the potential function $c = c(x)$ below
\begin{equation}\label{eqn:mainprob_attenuation}
\left\{
\begin{aligned}
- \Delta u - (k^{2} - c) u + \mathbf{i}kb u &= 0 & & \text{in\ } \Omega \subset \mathbb{R}^{n},\\
u &= g_{0} & & \text{on\ } \partial \Omega,
\end{aligned}
\right.
\end{equation}
from the knowledge of linearized DtN map for a large wavenumber $k$. The constant $b > 0$ is the attenuation constant. For the sake of simplicity, we use the notations in Section \ref{se_sta}, for instance, that $u$ represents the solution of the Schr\"{o}dinger potential equation with attenuation.

Referring to Section \ref{se_sta}, we let $u_{0}$, $u_{1}$ solve the following sub-problems
\begin{subequations}
\begin{align}
& \left\{
\begin{aligned}
- \Delta u_{0} - k^{2} u_{0} + \mathbf{i}kb \, u_{0} &= 0 \quad & & \text{in\ } \Omega,\\
u_{0} &= g_{0} & & \text{on\ } \partial \Omega,
\end{aligned}
\right. \label{eqn:sub_I_0_attenuation}\\
& \left\{
\begin{aligned}
- \Delta u_{1} - k^{2} u_{1} + \mathbf{i}kb \, u_{1} &= - c u_{0} & & \text{in\ } \Omega,\\
u_{1} &= 0 & & \text{on\ } \partial \Omega,
\end{aligned}
\right. \label{eqn:sub_I_1_attenuation}
\end{align}
\end{subequations}
and the solution $u$ of (\ref{eqn:mainprob_attenuation}) is
\begin{equation*}
u = u_{0} + u_{1} + \cdots
\end{equation*}
with "$\cdots$" denoting the remaining terms. Similarly the linearized DtN map $\Lambda^{\prime}$ is defined as
\begin{equation}\label{eqn:dtn_atten}
\Lambda^{\prime}: g_{0} \mapsto \partial_{\nu} u_{1} \quad {\rm on} \quad \partial \Omega.
\end{equation}

Multiplying both sides of the sub-problem (\ref{eqn:sub_I_1_attenuation}) with a test function $v$ satisfying
\begin{equation*}
-\Delta v - k^{2} v + \mathbf{i}kb \, v = 0 \quad {\rm in }\quad \Omega,
\end{equation*}
we obtain the equality
\begin{equation}\label{eqn:integral_atten}
\int_{\Omega} c \, u_{0} v = \int_{\partial\Omega} \left( \partial_{\nu} u_{1} \right) v
\end{equation}
which will again play important roles below.

Let $\epsilon$ be the operator norm of $\Lambda^{\prime} : H^{\frac{1}{2}}(\partial \Omega) \to H^{-\frac{1}{2}}(\partial \Omega)$, $0\in \Omega$ and $D := 2 \sup \left| x \right|$, $x\in \Omega$. We present the main stability estimate in this section.

\begin{thm}\label{thm_2}
Let $D \leqslant 1$, $\left\| c \right\|_{H^{1}(\Omega)} \leqslant M_{1}$, and $k > 1$, $\epsilon < 1$, then the following estimate holds true
\begin{equation*}
\begin{aligned}
\left\| c \right\|_{L^{2}(\Omega)}^{2}
&\leqslant C(\Omega) \left( k^{n+4} + E^{n+4} \right) e^{2D b} \epsilon^{2} \\
&\quad + C(\Omega) E^{n+4} e^{D^{2} b} \epsilon + C(\Omega) E^{n+2} e^{D^{2} b} \epsilon + \frac{M_{1}^{2}}{1 + E^{2} + 2 k^{2}}
\end{aligned}
\end{equation*}
for the linearized system (\ref{eqn:sub_I_0_attenuation})-(\ref{eqn:sub_I_1_attenuation})
with $E = -\ln \epsilon$ and the constant $C(\Omega)$ depending on the domain $\Omega$.
\end{thm}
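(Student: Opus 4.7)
The plan is to mirror the proof of Theorem \ref{thm_1}, replacing the free Helmholtz operator with its attenuated counterpart, and then carefully tracking how the attenuation constant $b$ enters the bounds for the complex exponential (CGO) solutions. The orthogonality identity (\ref{eqn:integral_atten}) plays exactly the role of (\ref{eqn:integral}), so the architecture of the argument — trace theorem, Fourier inversion, three-region splitting — is unchanged; only the $H^1$ bounds for $u_0$ and $v$ need to be redone.

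First, I would construct CGO solutions $u_0(x) = e^{\mathbf{i}\zeta\cdot x}$ and $v(x) = e^{\mathbf{i}\zeta^{*}\cdot x}$ of the homogeneous attenuated equation. This requires
\begin{equation*}
\zeta\cdot\zeta \;=\; \zeta^{*}\cdot\zeta^{*} \;=\; k^{2} - \mathbf{i}kb.
\end{equation*}
Using the same frame $\{e_1 = \xi/|\xi|, e_2, \dots, e_n\}$ as before, set
\begin{equation*}
\zeta = \frac{|\xi|}{2} e_1 + \beta\, e_2, \qquad \zeta^{*} = \frac{|\xi|}{2} e_1 - \beta\, e_2,
\end{equation*}
where $\beta = p + \mathbf{i}q \in \mathbb{C}$ is a root of $\beta^{2} = k^{2} - \tfrac{|\xi|^{2}}{4} - \mathbf{i}kb$. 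Since $\zeta + \zeta^{*} = \xi$ we still have $u_0 v = e^{\mathbf{i}\xi\cdot x}$, and (\ref{eqn:integral_atten}) delivers the key identity $\mathcal{F}[c](\xi) = \int_{\partial\Omega}(\partial_{\nu} u_1)\,v$.

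Second, I would estimate $\|u_0\|_{H^1(\Omega)}^2$ and $\|v\|_{H^1(\Omega)}^2$, which now grow like $e^{2D|q|}$ on $\Omega$. Solving $p^{2}-q^{2} = k^{2}-|\xi|^{2}/4$ and $2pq = -kb$ gives
\begin{equation*}
q^{2} \;=\; \tfrac{1}{2}\!\left(\sqrt{\bigl(k^{2}-|\xi|^{2}/4\bigr)^{2} + k^{2}b^{2}} - \bigl(k^{2}-|\xi|^{2}/4\bigr)\right).
\end{equation*}
In the regime $|\xi|/2 \leq k$ this yields $|q| \leq b/2 + O(1)$, so the product $\|u_0\|_{H^1}\|v\|_{H^1}$ carries at most a factor $e^{Db}$; squaring after the Cauchy–Schwarz/trace step produces the $e^{2Db}\epsilon^{2}$ factor on the first term. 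In the regime $|\xi|/2 > k$, the bound $|q|^{2} \leq |\xi|^{2}/4 - k^{2} + kb/2$ splits (by $\sqrt{a+b} \leq \sqrt{a}+\sqrt{b}$) as $e^{D|q|} \leq e^{D\sqrt{|\xi|^{2}/4 - k^{2}}}\,e^{D\sqrt{kb/2}}$; the first half is the same as in Theorem \ref{thm_1}, while a Young / AM–GM estimate on the second half separates the mixed $\sqrt{kb}$ growth into $k$-only and $b$-only pieces, producing the $e^{D^{2}b}$ factor.

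Third, as in Theorem \ref{thm_1} I split
\begin{equation*}
\|c\|_{L^{2}(\Omega)}^{2} = \int |\mathcal{F}[c](\xi)|^{2}\,\mathrm{d}\xi
\end{equation*}
into the three ranges $|\xi|/2 \leq k$, $k < |\xi|/2 < \rho/2$, $|\xi|/2 \geq \rho/2$ with $\rho^{2} = E^{2}/D^{2} + 2k^{2}$ (the factor $2k^{2}$ instead of $4k^{2}$ compensates for the shift introduced by the $\mathbf{i}kb$ term in the dispersion relation). The low-frequency piece contributes the $(k^{n+4}+E^{n+4})e^{2Db}\epsilon^{2}$ term; the intermediate piece contributes the two $e^{D^{2}b}\epsilon$ terms (here the gain of one power of $\epsilon$ comes from $e^{E}\epsilon^{2} = \epsilon$, exactly as in (\ref{eqn:bound_b})); the outer piece is controlled by $M_{1}$ via the a priori bound, yielding $M_{1}^{2}/(1+E^{2}+2k^{2})$. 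Combining the two cases $k > E$ and $k \leq E$ as in Theorem \ref{thm_1} finishes the proof.

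The main obstacle is entirely in Step 2: bookkeeping $|q|$ in the two regimes and showing that every $\sqrt{kb}$-type exponent can be decoupled into a $k$-only contribution (absorbed into polynomial factors via the factor-of-$\epsilon$ trade $e^{E}\epsilon^{2}=\epsilon$) plus a $b$-only contribution bounded by $Db$ or $D^{2}b$. Once these clean exponentials are isolated, the rest of the argument is a verbatim repetition of the unattenuated case.
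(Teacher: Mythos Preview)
Your overall architecture is right, and your handling of the range $|\xi| > 2k$ is essentially correct (the bound $|q|^{2} \leq |\xi|^{2}/4 - k^{2} + kb/2$ followed by $\sqrt{A+B}\leq\sqrt{A}+\sqrt{B}$ and AM--GM does produce the $e^{D^{2}b}$ factor together with an $e^{ck}$ that trades against $\epsilon$). The gap is in your low-frequency claim.

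You assert that for $|\xi|/2 \leq k$ one has $|q| \leq b/2 + O(1)$, hence an $e^{Db}$ factor uniformly on that whole range. This is false near the threshold $|\xi| = 2k$: there $k^{2} - |\xi|^{2}/4 = 0$ and your own formula gives $q^{2} = \tfrac{1}{2}kb$, i.e.\ $|q| = \sqrt{kb/2}$, which grows with $k$. Consequently the product $\|u_0\|_{H^{1}}\|v\|_{H^{1}}$ cannot be bounded by $C(\Omega)(1+k^{2})e^{Db}$ on all of $\{|\xi|\leq 2k\}$, and the $e^{2Db}\epsilon^{2}$ coefficient you claim for the entire low-frequency piece is not justified.

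The paper's fix is to split the low-frequency region into two sub-regions: for $|\xi|^{2}\leq 3k^{2}$ one genuinely has $|Y|\leq b$ (since then $X\geq k/2$ and $|Y|=kb/(2X)$), while for $3k^{2}<|\xi|^{2}\leq 4k^{2}$ one only gets $|Y|\leq \tfrac{1}{2}(k/D + Db)$. That intermediate annulus therefore carries an $e^{k+D^{2}b}$ factor, and it is precisely this region (not the $|\xi|>2k$ region alone) that produces the $E^{n+4}e^{D^{2}b}\epsilon$ term in the statement, after invoking $k\leq E$ and $e^{k}\epsilon^{2}\leq e^{E}\epsilon^{2}=\epsilon$. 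So the paper uses a \emph{four}-region decomposition in case (b), namely $|\xi|^{2}\leq 3k^{2}$, $3k^{2}<|\xi|^{2}\leq 4k^{2}$, $4k^{2}<|\xi|^{2}\leq\rho^{2}$, $|\xi|^{2}>\rho^{2}$, with $\rho^{2}=E^{2}/D^{2}+4k^{2}$ (your $2k^{2}$ is harmless but your stated rationale for it is not what is actually happening). Once you insert this extra annulus and the correct bound on $|Y|$ there, the rest of your plan goes through.
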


\begin{proof}
Similar to the proof of Theorem \ref{thm_1}, we again use the exponential solutions.

Let $\xi \in \mathbb{R}^{n}$ and $\zeta, \zeta^{*} \in \mathbb{C}^{n}$ with $\xi, \zeta, \zeta^{*} \neq 0$. We consider the following solution
\begin{equation*}
u_{0}(x) = e^{\mathbf{i} \zeta \cdot x},
\quad v(x) = e^{\mathbf{i} \zeta^{*} \cdot x}.
\end{equation*}
The orthonormal base is chosen by $\left\{ e_{1} = \frac{\xi}{\left| \xi \right|}, e_{2}, \cdots, e_{n} \right\}$ and we let
\begin{equation*}
\zeta := \frac{\left| \xi \right|}{2} e_{1} + \sqrt{ k^{2} - \frac{\left| \xi \right|^{2}}{4} - \mathbf{i}kb } \, e_{2},
\quad \zeta^{*} := \frac{\left| \xi \right|}{2} e_{1} - \sqrt{ k^{2} - \frac{\left| \xi \right|^{2}}{4} - \mathbf{i}kb } \, e_{2}.
\end{equation*}
Then $u_{0}(x) v(x) = e^{\mathbf{i} \xi \cdot x}$ and we derive, by (\ref{eqn:integral_atten})
\begin{equation*}
\mathcal{F}[c](\xi)
:= \int_{\Omega} c(x) e^{\mathbf{i} \xi \cdot x} \,\mathrm{d}x
= \int_{\partial\Omega} \left( \partial_{\nu} u_{1} \right) v.
\end{equation*}

We can write $\sqrt{ k^{2} - \frac{\left| \xi \right|^{2}}{4} - \mathbf{i}kb } = X + \mathbf{i} Y$, $X > 0$.
If $\left| \xi \right|^{2} \leqslant 3 k^{2}$, then squaring the both sides and following the arguments in \cite{IW2014}, we have
\begin{equation*}
\left| Y \right| = \frac{kb}{2X}
= \frac{kb}{\sqrt{2} \sqrt{\left( k^{2} - \frac{\left| \xi \right|^{2}}{4} \right) + \sqrt{\left( k^{2} - \frac{\left| \xi \right|^{2}}{4} \right)^{2} + k^{2} b^{2}}}}
\leqslant  b.
\end{equation*}
If $3 k^{2} < \left| \xi \right|^{2} \leqslant 4 k^{2}$, we derive
\begin{equation*}
\left| Y \right| =  \frac{kb}{\sqrt{2} \sqrt{\left( k^{2} - \frac{\left| \xi \right|^{2}}{4} \right) + \sqrt{\left( k^{2} - \frac{\left| \xi \right|^{2}}{4} \right)^{2} + k^{2} b^{2}}}}
\leqslant \frac{1}{2} \left( \frac{k}{D} + D b \right).
\end{equation*}
On the other hand, if $\left| \xi \right|^{2} > 4 k^{2}$, then
\begin{equation}\label{eqn:se5_Y}
Y = -\frac{kb}{2 X}
= -\frac{1}{2} \sqrt{\left( \frac{\left| \xi \right|^{2}}{2} - 2 k^{2} \right) + \sqrt{\left( \frac{\left| \xi \right|^{2}}{2} - 2 k^{2} \right)^{2} + 4 k^{2} b^{2}}}.
\end{equation}

We then derive that
\begin{itemize}

\item if $\left| \xi \right|^{2} \leqslant 3 k^{2}$, then
\begin{equation*}
\begin{aligned}
\left\| u_{0} \right\|_{H^{1}(\Omega)}^{2}
= \left\| v \right\|_{H^{1}(\Omega)}^{2}
& \leqslant \left( 1 + k^{2} \right) \int_{\Omega} e^{-2 Y e_{2} \cdot x} \,\mathrm{d}x \\
& \leqslant \left( 1 + k^{2} \right) \operatorname{Vol}_{n} \Omega \, e^{D b}.
\end{aligned}
\end{equation*}

\item If $3 k^{2} < \left| \xi \right|^{2} \leqslant 4 k^{2}$, then
\begin{equation*}
\begin{aligned}
\left\| u_{0} \right\|_{H^{1}(\Omega)}^{2}
= \left\| v \right\|_{H^{1}(\Omega)}^{2}
& \leqslant \left( 1 + k^{2} \right) \int_{\Omega} e^{-2 Y e_{2} \cdot x} \,\mathrm{d}x \\
& \leqslant \left( 1 + k^{2} \right) \operatorname{Vol}_{n} \Omega \, e^{\frac{1}{2} ( k + D^{2} b )}.
\end{aligned}
\end{equation*}

\item If $\left| \xi \right|^{2} > 4 k^{2}$, there holds
\begin{equation*}
\begin{aligned}
\left\| u_{0} \right\|_{H^{1}(\Omega)}^{2}
= \left\| v \right\|_{H^{1}(\Omega)}^{2}
& \leqslant \left( 1 + k^{2} \right) \operatorname{Vol}_{n-1} \Omega \int_{-\frac{D}{2}}^{\frac{D}{2}}  e^{-2 Y t} \,\mathrm{d}t \\
& = \left( 1 + k^{2} \right) \operatorname{Vol}_{n-1} \Omega \, \frac{ e^{Dy} - e^{-Dy} }{2y}
\end{aligned}
\end{equation*}
where a variable $y := -Y > 0$ due to (\ref{eqn:se5_Y}).
\end{itemize}
Similarly to the proof of Theorem \ref{thm_1}, we have
\begin{equation*}
\begin{aligned}
\left| \mathcal{F}[c](\xi) \right|^{2}
& \leqslant \epsilon^{2} C^{4}(\Omega) \left\| u_{0} \right\|_{H^{1}(\Omega)}^{2} \left\| v \right\|_{H^{1}(\Omega)}^{2},
\end{aligned}
\end{equation*}
which further yields that
\begin{itemize}

\item if $\left| \xi \right|^{2} \leqslant 3 k^{2}$, then
\begin{equation*}
\begin{aligned}
\left| \mathcal{F}[c](\xi) \right|^{2}
\leqslant C^{4}(\Omega) \left( 1 + k^{2} \right)^{2} \left( \operatorname{Vol}_{n} \Omega \right)^{2} e^{2D b} \epsilon^{2}.
\end{aligned}
\end{equation*}

\item If $3 k^{2} < \left| \xi \right|^{2} \leqslant 4 k^{2}$, then
\begin{equation*}
\begin{aligned}
\left| \mathcal{F}[c](\xi) \right|^{2}
\leqslant C^{4}(\Omega) \left( 1 + k^{2} \right)^{2} \left( \operatorname{Vol}_{n} \Omega \right)^{2} e^{ k + D^{2} b } \epsilon^{2}.
\end{aligned}
\end{equation*}

\item If $\left| \xi \right|^{2} > 4 k^{2}$, then
\begin{equation*}
\begin{aligned}
\left| \mathcal{F}[c](\xi) \right|^{2}
\leqslant C^{4}(\Omega) \left( 1 + k^{2} \right)^{2} \left( \operatorname{Vol}_{n-1} \Omega \right)^{2} \frac{ \left( e^{Dy} - e^{-Dy} \right)^{2} }{4 y^{2}} \epsilon^{2}.
\end{aligned}
\end{equation*}

\end{itemize}

Let $E = -\ln \epsilon > 0$, $k > 1$ and $\epsilon < 1$. We again consider the cases below
\begin{enumerate}
\item[a)] $k > E$ (i.e. $\epsilon = e^{-E} > e^{-k}$), and
\item[b)] $k \leqslant E$ (i.e. $\epsilon = e^{-E} \leqslant e^{-k}$).
\end{enumerate}

In the case a), we have
\begin{equation}\label{eqn:thm2proof_estimate1}
\begin{aligned}
\left\| c \right\|_{L^{2}(\Omega)}^{2}
& = \int_{\left| \xi \right|^{2} \leqslant 3 k^{2}} \left| \mathcal{F}[c](\xi) \right|^{2} \,\mathrm{d}\xi + \int_{ 3 k^{2} < \left| \xi \right|^{2} } \left|\mathcal{F}[c](\xi) \right|^{2} \,\mathrm{d}\xi \\
& \leqslant C^{4}(\Omega) \left( 1 + k^{2} \right)^{2} \left( \operatorname{Vol}_{n} \Omega \right)^{2} \sigma_{n} \left( \sqrt{3} k \right)^{n} e^{2D b} \epsilon^{2} + \frac{M_{1}^{2}}{1 + 3 k^{2}} \\
& \leqslant \frac{1}{4}C_{1}(\Omega) k^{n+4} e^{2D b} \epsilon^{2} + \frac{M_{1}^{2}}{1 + E^{2} + 2 k^{2}}.
\end{aligned}
\end{equation}

In the case b), we again choose $\rho^{2} := \frac{E^{2}}{D^{2}} + 4 k^{2}$ and split
\begin{equation*}
\begin{aligned}
\left\| c \right\|_{L^{2}(\Omega)}^{2}
&= \int_{\left| \xi \right|^{2} \leqslant 3 k^{2}} \left| \mathcal{F}[c](\xi) \right|^{2} \,\mathrm{d}\xi + \int_{ 3 k^{2} < \left| \xi \right|^{2} \leqslant 4 k^{2} } \left| \mathcal{F}[c](\xi) \right|^{2} \,\mathrm{d}\xi \\
& \quad + \int_{ 4 k^{2} < \left| \xi \right|^{2} \leqslant \rho^{2} } \left|\mathcal{F}[c](\xi) \right|^{2} \,\mathrm{d}\xi + \int_{ \rho^{2} < \left| \xi \right|^{2} } \left| \mathcal{F}[c](\xi) \right|^{2} \,\mathrm{d}\xi \\
&\leqslant \frac{1}{4} C_1(\Omega) k^{n+4} e^{2Db} \epsilon^2 + \frac{1}{4} C_1(\Omega) \left(2^n- 3^{\frac{n}{2}}\right) k^n e^{k+D^2 b} \epsilon^2 \\
& \quad + C^{4}(\Omega) \left( 1 + k^{2} \right)^{2} \left( \operatorname{Vol}_{n-1} \Omega \right)^{2} \left( \int_{ 4 k^{2} < \left| \xi \right|^{2} \leqslant \rho^{2} } \frac{ \left( e^{Dy} - e^{-Dy} \right)^{2}}{4 y^{2}} \,\mathrm{d}\xi \right)  \epsilon^{2} \\
&\quad + \frac{M_{1}^{2}}{1 + \rho^{2}}.
\end{aligned}
\end{equation*}
As in Theorem \ref{thm_1}, $\frac{ e^{Dy} - e^{-Dy} }{y}$ increases while $y > 0$ and thus it attains the maximum value if $\left| \xi \right| = \rho$, which yields
\begin{equation*}
\begin{aligned}
\frac{ \left( e^{Dy} - e^{-Dy} \right)^{2}}{4 y^{2}}
\leqslant \frac{ e^{D \sqrt{ \frac{1}{2} \frac{E^{2}}{D^{2}} + \sqrt{ \left( \frac{1}{2} \frac{E^{2}}{D^{2}} \right)^{2} + 4 k^{2} b^{2} } }} }{ \frac{1}{2} \frac{E^{2}}{D^{2}} + \sqrt{ \left( \frac{1}{2} \frac{E^{2}}{D^{2}} \right)^{2} + 4 k^{2} b^{2} } }
\quad \text{because} \quad 4 k^{2} < \left| \xi \right|^{2} \leqslant \rho^{2}.
\end{aligned}
\end{equation*}
Since
\begin{equation*}
\begin{aligned}
\frac{1}{2} \frac{E^{2}}{D^{2}} + \sqrt{ \left( \frac{1}{2} \frac{E^{2}}{D^{2}} \right)^{2} + 4 k^{2} b^{2}} \geqslant \frac{E^{2}}{D^{2}}
\end{aligned}
\end{equation*}
and, by using $k \leqslant E$
\begin{equation*}
\frac{1}{2} \frac{E^{2}}{D^{2}} + \sqrt{ \left( \frac{1}{2} \frac{E^{2}}{D^{2}} \right)^{2} + 4 k^{2} b^{2} }
\leqslant \frac{E^{2}}{D^{2}} + 2 k b
\leqslant \frac{E^{2}}{D^{2}} + 2 \frac{E}{D} D b
\leqslant \left( \frac{E}{D} + D b \right)^{2},
\end{equation*}
we then bound
\begin{equation*}
\begin{aligned}
\frac{ \left( e^{Dy} - e^{-Dy} \right)^{2} }{4 y^{2}}
\leqslant \frac{ e^{ E + D^{2} b } }{ \left( \frac{E}{D} \right)^{2} }.
\end{aligned}
\end{equation*}
Meanwhile, by using $k \leqslant E$ and the proof of Theorem \ref{thm_1}, we yield
\begin{equation*}
\begin{aligned}
& \int_{ 4 k^{2} < \left| \xi \right|^{2} \leqslant \rho^{2} } \,\mathrm{d}\xi
\leqslant \sigma_{n} \frac{E^{n}}{D^{n}} \left[ \left( 1 + \left( 2 D \right)^{2} \right)^{\frac{n}{2}} - \left( 2 D \right)^{n} \right].
\end{aligned}
\end{equation*}

Combining above inequalities and using $k \leqslant E$ again, we conclude that
\begin{equation}\label{eqn:thm2proof_estimate2}
\begin{aligned}
\left\| c \right\|_{L^{2}(\Omega)}^{2}
&\leqslant \frac{1}{4} C_1(\Omega) k^{n+4} e^{2Db} \epsilon^2 + \frac{1}{4} C_1(\Omega) \left(2^n- 3^{\frac{n}{2}}\right) k^n e^{k+D^2 b} \epsilon^2 \\
& \quad + C^{4}(\Omega) \left( \operatorname{Vol}_{n-1} \Omega \right)^{2} \sigma_{n} \frac{2^{2}}{D^{n-2}} \left[ \left( 1 + \left( 2 D \right)^{2} \right)^{\frac{n}{2}} - \left( 2 D \right)^{n} \right] E^{n+2} e^{D^{2} b} \epsilon \\
& \quad + \frac{M_{1}^{2}}{1 + \frac{E^{2}}{D^{2}} + 4 k^{2}}.
\end{aligned}
\end{equation}
We end the proof by
% denoting constants
%\begin{equation*}
%\begin{aligned}
%C_{2}(\Omega) & := C^{4}(\Omega) 2^{2} \left[ 2^{n} - \left( \sqrt{3} \right)^{n} \right] \left( \operatorname{Vol}_{n} \Omega \right)^{2} \sigma_{n}, \\
%C_{3}(\Omega) & := C^{4}(\Omega) \left( \operatorname{Vol}_{n-1} \Omega \right)^{2} \sigma_{n} \frac{2^{2}}{D^{n-2}} \left[ \left( 1 + \left( 2 D \right)^{2} \right)^{\frac{n}{2}} - \left( 2 D \right)^{n} \right]
%\end{aligned}
%\end{equation*}
combining the bounds (\ref{eqn:thm2proof_estimate1}) for $k > E$ and (\ref{eqn:thm2proof_estimate2}) for $k \leqslant E$.
\end{proof}

Following the discussion in Section \ref{se_sta}, we again observe the increasing stability with growing wavenumbers $k$. At the same time, because of the attenuation term, such an error estimate is deteriorated by the attenuation constant $b$ with a linearly exponential growth. Such a dependance is better than that for the multifrequency inverse source problem where a quadratically exponential growth is observed comparing the increasing stability estimates in \cite{CIL2016, IL2018}.

\section{Reconstruction algorithm}\label{se_algo}

In this section, we propose a reconstruction algorithm for the linearized inverse Schr\"{o}dinger potential problem at a large wavenumber, observing that the linearized problems are (\ref{eqn:sub_I_0}) and (\ref{eqn:sub_I_1}). For the sake of simplicity, we fix the dimensionality $n = 2$.

The linearized problem (\ref{eqn:sub_I_0})-(\ref{eqn:sub_I_1}) represents a linearized DtN map $\Lambda^{\prime}: g_{0} \mapsto \partial_{\nu} u_{1}$ by varying $g_{0}$. In particular, we can choose the solutions $u_{0}$ to be the bounded complex exponential solutions (\ref{eqn:uvcgo}) for $n = 2$ such that
\begin{equation}\label{eqn:numer_u0}
u_{0} = e^{\mathbf{i} \zeta \cdot x}
\quad \text{with} \quad \zeta = \frac{\xi}{2} + \sqrt{k^{2} - \frac{\left| \xi \right|^{2}}{4}} \, \xi^{\perp}
\end{equation}
by varying different vectors $\xi$ in the phase space and $g_{0} = \left. u_{0} \right|_{\partial \Omega}$, where $\xi^{\perp}$ satisfies $\xi \cdot \xi^{\perp} = 0$ and $\left| \xi^{\perp} \right| = 1$. At the same time, substituting each $u_{0}$ above into the second problem (\ref{eqn:sub_I_1}), we obtain the solution $u_{1}$ and its Neumann trace $\partial_{\nu} u_{1} = \Lambda^{\prime} g_{0}$ which also varies with different choice of $\xi$. Similarly we can choose the test function $v$ to be the relevant exponential solutions
\begin{equation}\label{eqn:numer_v}
v = e^{\mathbf{i} \zeta^{*} \cdot x}
\quad \text{with\ } \quad \zeta^{*} = \frac{\xi}{2} - \sqrt{k^{2} - \frac{\left| \xi \right|^{2}}{4}} \, \xi^{\perp}.
\end{equation}

To realize the reconstruction algorithm numerically, we shall approximate the linearized DtN map $\Lambda^{\prime}$ (\ref{eqn:dtn_p}) accurately. In particular, we mention that the Neumann boundary data $\partial_\nu u_{1}$ depends on the unknown potential function $c$ referring to (\ref{eqn:sub_I_1}). Assuming that the potential function $c$ is comparably small with respect to the squared wavenumber $k^{2}$ and following the standard linearization approach (for instance, in \cite{DS1994}), we define a linearized Neumann boundary data
\begin{equation}\label{eqn:lineardtn}
g_{1}^{\,\prime} := g_{1} - \partial_{\nu} u_{0} = \partial_{\nu} u - \partial_{\nu} u_{0},
\end{equation}
where $g_{1}$ is the Neumann boundary data in (\ref{eqn:dtn}) of the original problem (\ref{eqn:prob}) and $\partial_{\nu} u_{0}$ is the Neumann boundary data of the sub-problem (\ref{eqn:sub_I_0}). We thus approximate the linearized Neumann boundary data numerically by $ \partial_{\nu} u_{1} \approx g_{1}^{\,\prime}$, where the "higher" order terms are ignored in (\ref{eqn:lineardtn}). If we substitute the above solutions (\ref{eqn:numer_u0}), (\ref{eqn:numer_v}) into the equality (\ref{eqn:integral}), the recovery of $c$ is equivalent to solving the following integral equations
\begin{equation}\label{eqn:integral_F_v}
\mathcal{F}[c](\xi)
= \int_{\Omega} c(x) \, e^{\mathbf{i} \xi \cdot x} \,\mathrm{d}x
= \int_{\partial \Omega} \left( \partial_{\nu} u_{1} \right) e^{\mathbf{i} \zeta^{*} \cdot x} \,\mathrm{d}s_{x}
\approx \int_{\partial \Omega} g_{1}^{\,\prime} \, e^{\mathbf{i} \zeta^{*} \cdot x} \,\mathrm{d}s_{x}
\end{equation}
where vectors $\xi$, $\xi^{\perp}$ vary in the phase space with a fixed wavenumber $k$. The left hand side of the above formula (\ref{eqn:integral_F_v}) is the Fourier coefficient $\mathcal{F}[c](\xi)$ of the potential function $c$ at a point $\xi$. Thus the reconstruction of $c$ can be achieved by varying vectors $\xi$, $\xi^{\perp}$ and the inverse Fourier transform.

\begin{rmk}
Notice that, in current section, the chosen orthonormal base of the phase space $\mathbb{R}^{2}$ is $\left\{ e_{1} = \frac{\xi}{\left| \xi \right|}, \, e_{2} = \xi^{\perp} \right\}$. Thus the formula of a vector $\zeta$ (or $\zeta^{*}$) is fixed by $k$. Noticing that $\left| \xi \right| > 2k$ and $\sqrt{k^{2} - \frac{\left| \xi \right|^{2}}{4}} = \mathbf{i} \sqrt{\frac{\left| \xi \right|^{2}}{4} - k^{2}}$, we write the exponential solution $u_{0}$ (or $v$) explicitly by
\begin{equation}\label{eqn:numer_u0_xi}
u_{0}(x) = e^{\mathbf{i} \zeta \cdot x}
= \left\{
\begin{aligned}
&\exp \left\{ \mathbf{i} \left( \frac{\xi}{2} + \sqrt{k^{2} - \frac{\left| \xi \right|^{2}}{4}} \, \xi^{\perp} \right) \cdot x \right\}, & \left| \xi \right| \leqslant 2k, \\
&\exp \left\{ \mathbf{i} \frac{\xi}{2} \cdot x \right\} \exp \left\{ - \sqrt{\frac{\left| \xi \right|^{2}}{4} - k^{2}} \, \xi^{\perp} \cdot x \right\}, & \left| \xi \right| > 2k.
\end{aligned}
\right.
\end{equation}
Indeed, $u_{0}$ is a plane wave traveling along its wave-vector $\zeta$ with a length equal to the wavenumber $k$ if $\left| \xi \right| \leqslant 2k$. Meanwhile, if $\left| \xi \right| > 2k$, the (high) oscillation of the exponential solution $u_{0}$ is observed in a direction $e_{1} = \frac{\xi}{\left| \xi \right|}$ with the spatial frequency $\frac{\left| \xi \right|}{2}$, and decays exponentially along the unit vector $e_{2} = \xi^{\perp}$. Then numerical calculation in (\ref{eqn:integral}) or (\ref{eqn:integral_F_v}) becomes unstable when $\left| \xi \right| > 2k$.
\end{rmk}

%--------------------------------------------------------------------------------

Now we describe a reconstruction algorithm based on discrete sets of length and angle of the vectors in phase space. We first choose a discrete and finite length set
\begin{equation*}
\{\kappa_{\ell}\}_{\ell=1}^{M} \subset (0, m k \,]
\quad \text{for any fixed\ } k.
\end{equation*}
Here we choose $2 \leqslant m \in \mathbb{N}_{+}$ and $m k$ is the maximum length of the vector $\xi$. Combining with two unit vector (or angle) sets
\begin{equation*}
\{\hat{y}_{s}\}_{s=1}^{N} \subset \mathbb{S}^{n-1}
\quad \text{and} \quad \{\hat{z}_{s}\}_{s=1}^{N} \subset \mathbb{S}^{n-1},
\end{equation*}
which satisfy that $\hat{y}_{s} \cdot \hat{z}_{s} = 0$, we denote
\begin{equation*}
\xi^{\langle \ell;s \rangle} = \kappa_{\ell} \, \hat{y}_{s},
\quad
\zeta^{\langle \ell;s \rangle} = \frac{\kappa_{\ell}}{2} \hat{y}_{s} + \sqrt{k^{2} - \frac{\kappa_{\ell}^{2}}{4}} \, \hat{z}_{s},
\quad
\zeta^{\langle \ell;s \rangle}_{*} = \frac{\kappa_{\ell}}{2} \hat{y}_{s} - \sqrt{k^{2} - \frac{\kappa_{\ell}^{2}}{4}} \, \hat{z}_{s},
\end{equation*}
which are the vectors (or points) chosen in the phase space, while $\ell = 1,2,\cdots,M$ and $s = 1,2,\cdots,N$. More precisely, the superscript notation $\cdot^{\langle \ell;s \rangle}$ will be referred to a vector $\xi^{\langle \ell;s \rangle}$ with the $\ell$th length $\kappa_{\ell}$ and the $s$th angle $\hat{y}_{s}$. Finally, for the inverse Fourier transform, a numerical quadrature rule can be constructed by a suitable choice of the weights $\sigma^{\langle \ell;s \rangle}$ according to these points $\xi^{\langle \ell;s \rangle}$.

We summarize our reconstruction algorithm below.
\vspace{10pt}
\hrule\hrule
\vspace{8pt}
{\parindent 0pt \bf Algorithm 1: Reconstruction Algorithm for the Linearized Schr\"{o}dinger Potential Problem}
\vspace{8pt}
\hrule
\vspace{8pt}
{\parindent 0pt \bf Input:} %
$\{\kappa_{\ell}\}_{\ell=1}^{M}$, %
$\{\hat{y}_{s}\}_{s=1}^{N}$, $\{\hat{z}_{s}\}_{s=1}^{N}$ and %
$\sigma^{\langle \ell;s \rangle}$; \\[5pt]%
{\parindent 0pt \bf Output:} %
Approximated Potential $c_{\rm inv}= c^{\langle M+1;1 \rangle}$. \\[-5pt]%
\begin{enumerate}
  \item[1:] $\,$ Set $c^{\langle 1;1 \rangle} := 0$; %
  \item[2:] $\,$ {\bf For} $\ell = 1,2,\cdots,M$ (length~updating) %
  \item[3:] $\,$ \quad {\bf For} $s = 1,2,\cdots,N$ (angle~updating) %
  \item[4:] $\,$ \quad \quad Choose $u_{0} := \exp \{ \mathbf{i} \zeta^{\langle \ell;s \rangle} \cdot x \}$ and $g_{0} := \left. u_{0} \right|_{\partial \Omega}$; %
  \item[5:] $\,$ \quad \quad Measure the Neumann boundary data $\partial_{\nu} u$ of the forward problem (\ref{eqn:prob})
  \item[] $\,$ \quad \qquad given the Dirichlet boundary data $g_{0}$; %
  \item[6:] $\,$ \quad \quad Calculate the approximated Neumann boundary data $g_{1}^{\,\prime} = \partial_{\nu} u - \partial_{\nu} u_{0}$
  \item[] $\,$ \quad \qquad on the boundary $\partial \Omega$; %
  \item[7:] $\,$ \quad \quad Choose $v:= \exp \{ \mathbf{i} \zeta^{\langle \ell;s \rangle}_{*} \cdot x \}$ and $w := \big[ u_{0} v \big]^{-1} = \exp \{ -\mathbf{i} \xi^{\langle \ell;s \rangle} \cdot x \}$; %
  \item[8:] $\,$ \quad \quad Compute $\mathcal{F}[c](\xi^{\langle \ell;s \rangle}) \approx \int_{\partial \Omega} g_{1}^{\,\prime} \, v \,\mathrm{d}s_{x}$; %
  \item[9:] $\,$ \quad \quad Update $c^{\langle \ell;s+1 \rangle} = c^{\langle \ell;s \rangle} + \sigma^{\langle \ell;s \rangle} \mathcal{F}[c](\xi^{\langle \ell;s \rangle}) \, w$; %
  \item[10:] $\,$ \quad {\bf End} %
  \item[11:] $\,$ \quad Set $c^{\langle \ell+1;1 \rangle} := c^{\langle \ell;N+1 \rangle}$; %
  \item[12:] $\,$ {\bf End}. %
\end{enumerate}
\vspace{8pt}
\hrule\hrule
\vspace{10pt}

The above Algorithm 1 provides us with a reconstructed potential function $c_{\rm inv}$ to mimic the exact one. The main error between these two potential functions consists of two parts. The first one is the approximation error because of the discrete and finite length set and angle sets. The second one is the numerical error of approximated linearized DtN map $\Lambda^{\prime}$ in (\ref{eqn:integral_F_v}) where the elliptic equation solvers of large wavenumbers and the numerical differentiation of the Neumann boundary data may induce some additional ill-posedness. The error estimate of Algorithm 1 may be carried out by choosing the length and angle sets following the analysis in \cite{BLRX2015,BT2010}. We intend to report such results in a separate work.

%--------------------------------------------------------------------------------

\section{Numerical examples}\label{se_numer}

In this section, we provide some numerical examples verifying the efficiency of the above Algorithm 1. The main computational costs in Algorithm 1 is Step 5 for the forward problem (\ref{eqn:prob}) and Step 9 for the inverse Fourier transform. To avoid the inverse crime, we will use fine grids (for instance, $100 \times 100$ or $200 \times 200$ equal-distance points) for the forward problem and a coarse grid ($90 \times 90$ equal-distance points) for the inversion both in the square domain $[-1,1]^{2}$.

\subsection{CASE 1}
In the first case, a circular domain $\Omega$ with a radius $r = 0.7$ and a simple true potential function $c$ are chosen, which are shown in the upper left panel of Figure \ref{fig:1_potential}. The potential function contains one peak and one valley patterns in the bounded domain $\Omega$. The boundary measurement points on $\partial \Omega$ are also presented in the upper left panel of Figure \ref{fig:1_potential}, marked by "$*$". The forward problem grid is $100 \times 100$ equal-distance points and the inversion grid is $90 \times 90$ equal-distance points in the domain $[-1,1]^{2}$. In Subsection \ref{subse_4.1.3}, we will show that one can improve the resolution of the reconstructed potential function if a finer grid of the forward problem is chosen.

\begin{figure}[htbp]
\centering
\includegraphics[width=0.8\textwidth]{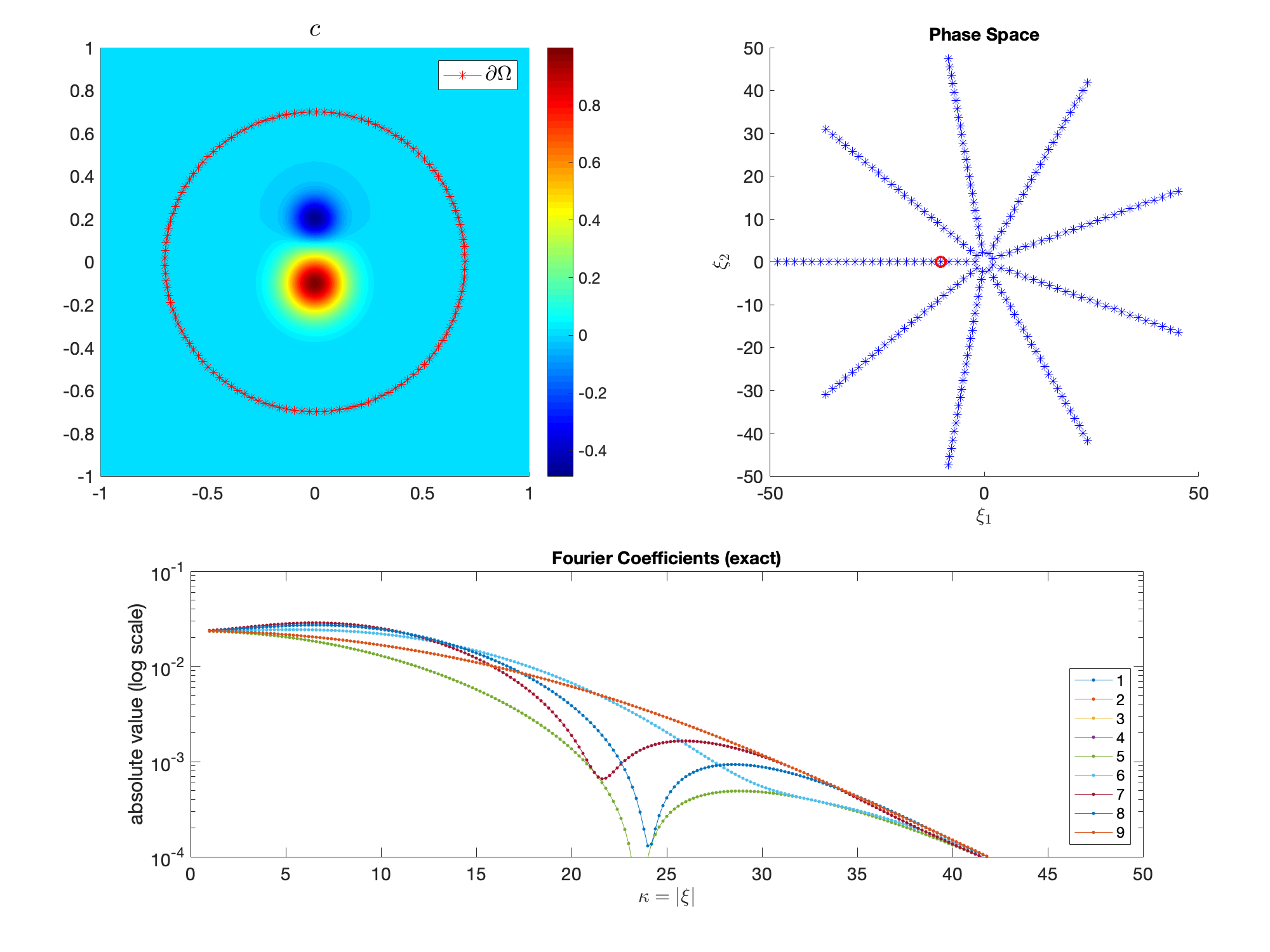}
\caption{Upper left: the true potential function $c$. Upper right: the nine slope lines in the phase space. The red "$\circ$" represents a sample point $\xi = \kappa \hat{y}$ with $\kappa = 10.2$ and $\hat{y} = (\cos(\pi),\sin(\pi)) = (-1,0)$. Bottom: Fourier coefficients of the true potential function near the nine slope lines in the phase space.}
\label{fig:1_potential}
\end{figure}

\subsubsection{CASE 1: General recovery}
To realize the Fourier transform $\mathcal{F}[\cdot]$, we use the following sampling points $\xi$ in the phase space which are marked by "$*$" in the upper right panel of Figure \ref{fig:1_potential}. The modulus of $\xi$, that is $\kappa = \left| \xi \right|$, are equally distributed in the interval $[1,50]$, and the step size is $0.2$. The degree of the angle between two adjacent slope lines is $\frac{2}{9} \pi$. For example, the red "$\circ$" in the upper right panel of Figure \ref{fig:1_potential} is a point in the phase space with an angle $\pi$ and a modulus $10.2$. Thus, the coordinate of this point $\xi$ in the phase space is $(-10.2,0)$, or $\kappa = 10.2$ and $\hat{y} = (\cos(\pi),\sin(\pi)) = (-1,0)$, i.e., $\xi = \kappa \hat{y}$. Fourier coefficients of the true potential function near the nine slope lines are presented in the bottom panel of Figure \ref{fig:1_potential}.

By varying different $\hat{y}$ (or corresponding $\hat{z}$) and different $\kappa = \left| \xi \right|$ in the given sets, we obtain different exponential solutions (\ref{eqn:numer_u0}) (or (\ref{eqn:numer_u0_xi})). In particular, we present two different $u_{0}$ (real part only) in Figures \ref{fig:u0g0_k15p2_xi8p4} for $\kappa \leqslant 2k$ and Figure \ref{fig:u0g0_k15p2_xi32p6} for $\kappa > 2k$ while $k = 15.2$ is fixed. The real part of the corresponding Dirichlet boundary data $g_{0}$ for the linearized problem are also presented in these two figures.
For instance, in Figure \ref{fig:u0g0_k15p2_xi8p4} (left), the exponential solution $u_{0}$ with a small length $\kappa = 8.4$ and $\hat{y} = (-0.17,0.98)$ is a plane wave indeed. In Figure \ref{fig:u0g0_k15p2_xi8p4} (middle), we present its Dirichlet boundary data $g_{0}$.
By substituting the Dirichlet boundary data $g_{0}$ into the original problem (\ref{eqn:prob}), we obtain the solution $u$ and its Neumann boundary data $g_{1} = \partial_{\nu} u$, which are shown in Figure \ref{fig:u0g0_k15p2_xi8p4} (right).
In order to obtain the measurable data $g_{1}^{\,\prime}$ of the linearized DtN map (\ref{eqn:dtn_p}), we subtract the benchmark solution $u_{0}$ with respect to the solution $u$ in Figure \ref{fig:1_u1g1p_k15p2_xi8p4} (left), and measure the linearized data $g_{1}^{\,\prime} = g_{1} - \partial_{\nu} u_{0}$ on the boundary $\partial \Omega$, which is shown in Figure \ref{fig:1_u1g1p_k15p2_xi8p4} (right).

\begin{figure}[htbp]
\centering
\includegraphics[width=1.0\textwidth]{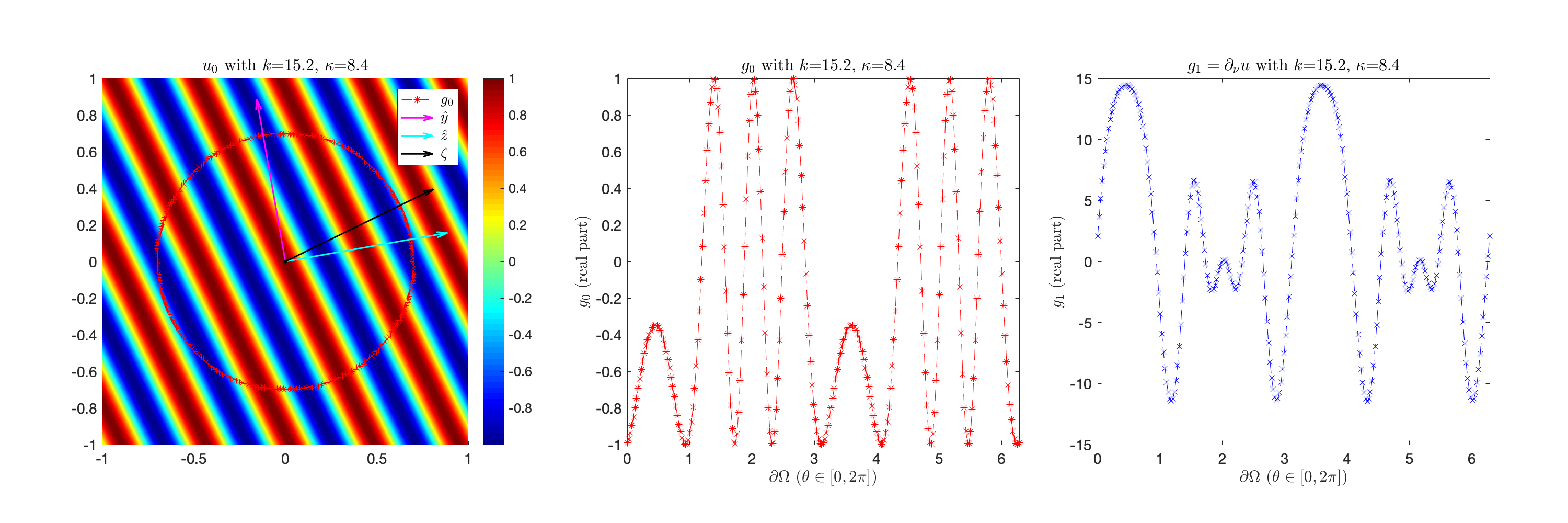}
\caption{Set $k = 15.2$, $\kappa = 8.4$ and $\hat{y} = (-0.17,0.98)$. Left: the exponential solution $u_{0}$. Middle: the Dirichlet boundary data $g_{0}$. Right: the Neumann boundary data $g_{1} = \partial_{\nu} u$.}
\label{fig:u0g0_k15p2_xi8p4}
\end{figure}

\begin{figure}[htbp]
\centering
\includegraphics[width=0.7\textwidth]{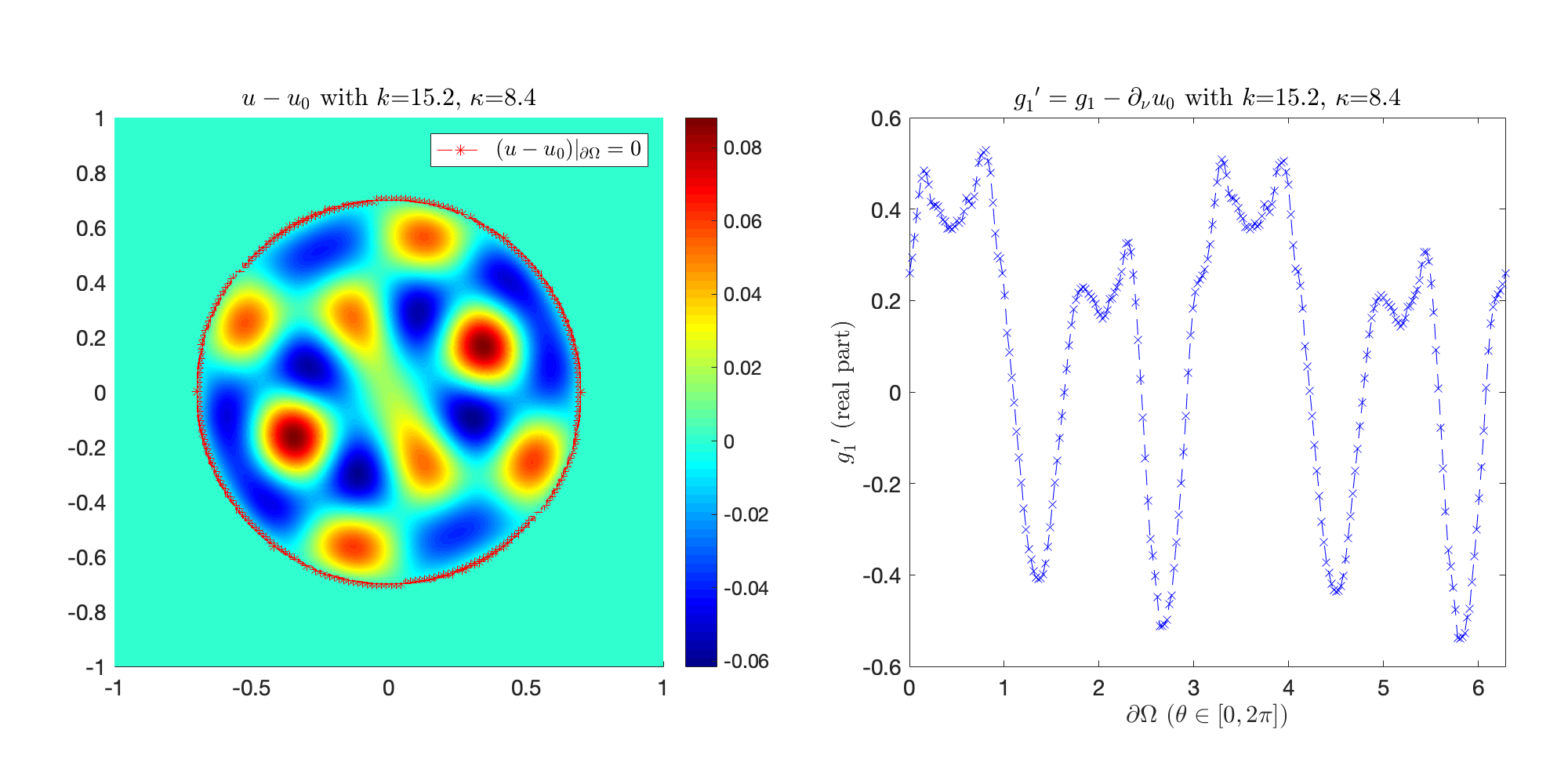}
\caption{Set $k = 15.2$, $\kappa = 8.4$ and $\hat{y} = (-0.17,0.98)$. Left: the difference $u - u_{0}$. Right: the linearized Neumann boundary data $g_{1}^{\,\prime} = g_{1} - \partial_{\nu} u_{0}$.}
\label{fig:1_u1g1p_k15p2_xi8p4}
\end{figure}

The second exponential solution $u_{0}$ with a large length $\kappa = 32.6$ and $\hat{y} = (-0.77,-0.64)$ is shown in Figure \ref{fig:u0g0_k15p2_xi32p6} (left) where exponential decay in the direction $\hat{z}$ is observed. The Dirichlet boundary data $g_{0}$ and the Neumann boundary data $g_{1} = \partial_{\nu} u$ are shown in Figure \ref{fig:u0g0_k15p2_xi32p6} (middle and right) respectively. The difference between $u$ and $u_{0}$, the measurable data $g_{1}^{\,\prime} = g_{1} - \partial_{\nu} u_{0}$ of the linearized DtN map are shown in Figure \ref{fig:1_u1g1p_k15p2_xi32p6}. Compared with the moderate linearized difference for $\kappa = 8.4$ as shown in Figure \ref{fig:1_u1g1p_k15p2_xi8p4}, we observe a large amplitude of the linearized difference in the right panel of Figure \ref{fig:1_u1g1p_k15p2_xi32p6} for $\kappa = 32.6$. Such phenomena further yields an unstable calculation of the Fourier coefficients when $\kappa > 2k$.

\begin{figure}[htbp]
\centering
\includegraphics[width=1.0\textwidth]{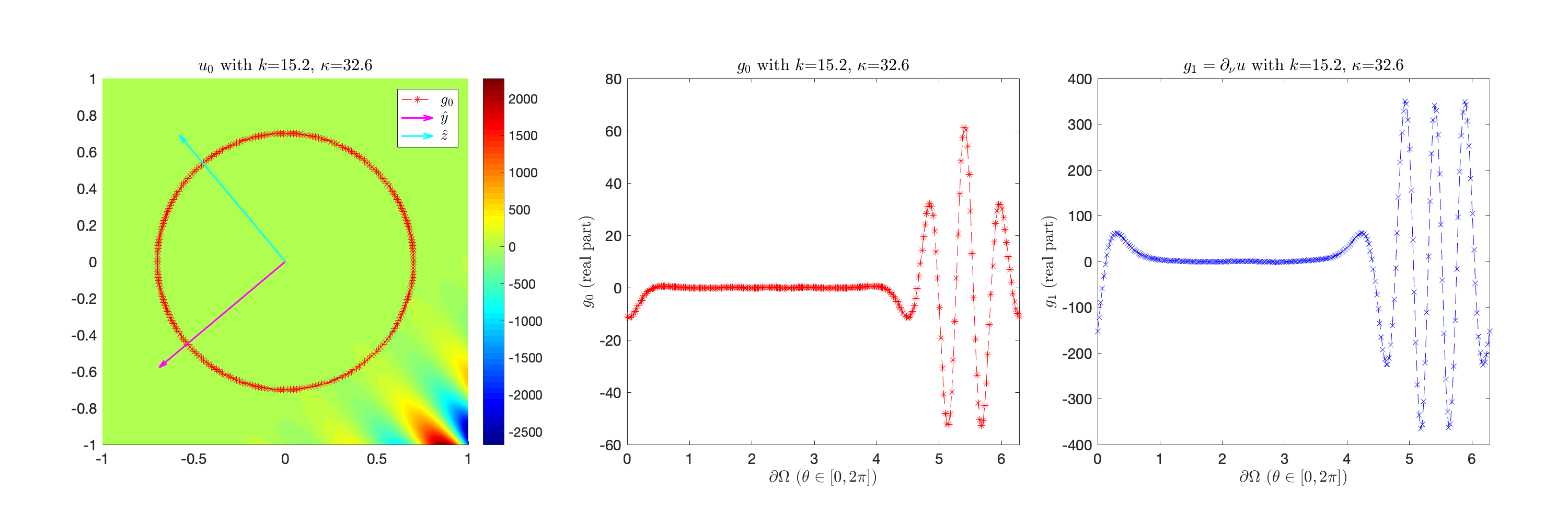}
\caption{Set $k = 15.2$, $\kappa = 32.6$ and $\hat{y} = (-0.77,-0.64)$. Left: the exponential solution $u_{0}$. Middle: the Dirichlet boundary data $g_{0}$. Right: the Neumann boundary data $g_{1} = \partial_{\nu} u$.}
\label{fig:u0g0_k15p2_xi32p6}
\end{figure}

\begin{figure}[htbp]
\centering
\includegraphics[width=0.7\textwidth]{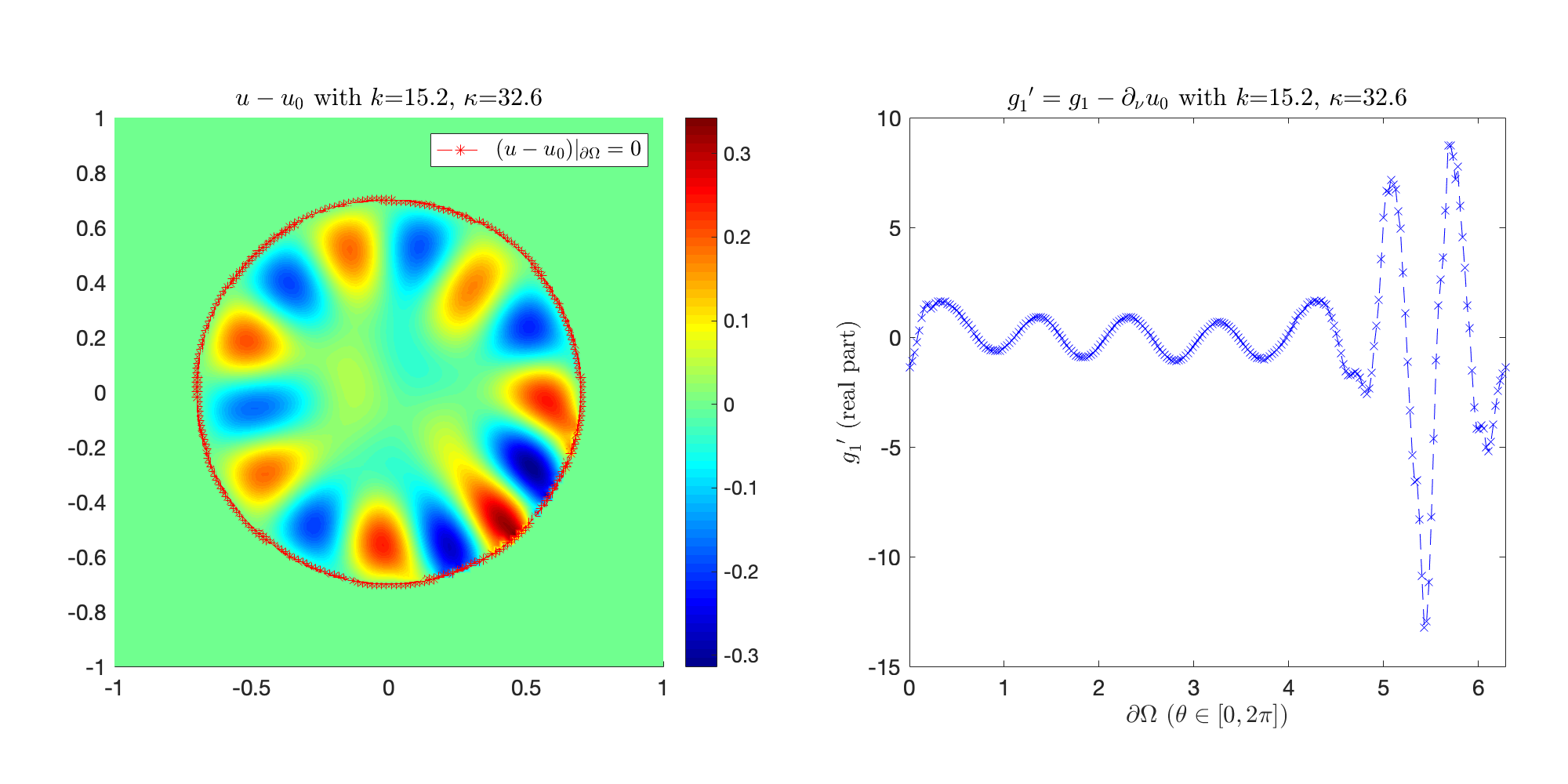}
\caption{Set $k = 15.2$, $\kappa = 32.6$ and $\hat{y} = (-0.77,-0.64)$. Left: the difference $u - u_{0}$. Right: the linearized Neumann boundary data $g_{1}^{\,\prime} = g_{1} - \partial_{\nu} u_{0}$.}
\label{fig:1_u1g1p_k15p2_xi32p6}
\end{figure}

Noticing that the exponential solution $v$ in (\ref{eqn:numer_v}) is similar to $u_{0}$, we use the same sampling points $\xi$ in the upper right panel of Figure \ref{fig:1_potential} for $v$. However, to illustrate the difference, for any given wavenumber $k$ and vector $\xi = \kappa \hat{y}$, the directions of two wave-vectors $\zeta = \frac{\kappa}{2} \hat{y} + \sqrt{k^{2} - \frac{\kappa^{2}}{4}} \, \hat{z}$ and $\zeta^{*} = \frac{\kappa}{2} \hat{y} - \sqrt{k^{2} - \frac{\kappa^{2}}{4}} \, \hat{z}$ are not the same. Indeed, since these vectors obey $\hat{y} \cdot \hat{z} = 0$, the wave-vectors $\zeta$, $\zeta^{*}$ of exponential solutions $u_{0}$ and $v$ are symmetric with respect to the vector $\xi$ (or $\hat{y}$). Thus, we could reflect the pattern of the exponential solution $u_{0}$ with respect to the vector $\xi$ to obtain the pattern of the solution $v$.

The key step of the reconstruction Algorithm 1 is to compute the value of each Fourier coefficient $\mathcal{F}[c](\xi)$ with different $\xi = \kappa \hat{y}$ for any fixed $k$ by employing the formula (\ref{eqn:integral_F_v}). The numerical coefficients are collected in Figure \ref{fig:1_fourier_k15p2} when $k = 15.2$. Each curve there represents the recovered value of Fourier coefficients on each slope of sampling points $\xi$ in the upper right panel of Figure \ref{fig:1_potential}, while the horizontal axis is the length interval $(0,50]$ for $\kappa = \left| \xi \right|$.
We shall emphasize that the chosen wavenumber $k = 15.2$ avoids the eigenvalues for the Laplacian operator in the domain $\Omega$. On the other hand, by choosing a wavenumber $k$ near the eigenvalues, we may recover a potential function with lower resolution which will be presented in next subsection.

\begin{figure}[htbp]
\centering
\includegraphics[width=0.8\textwidth]{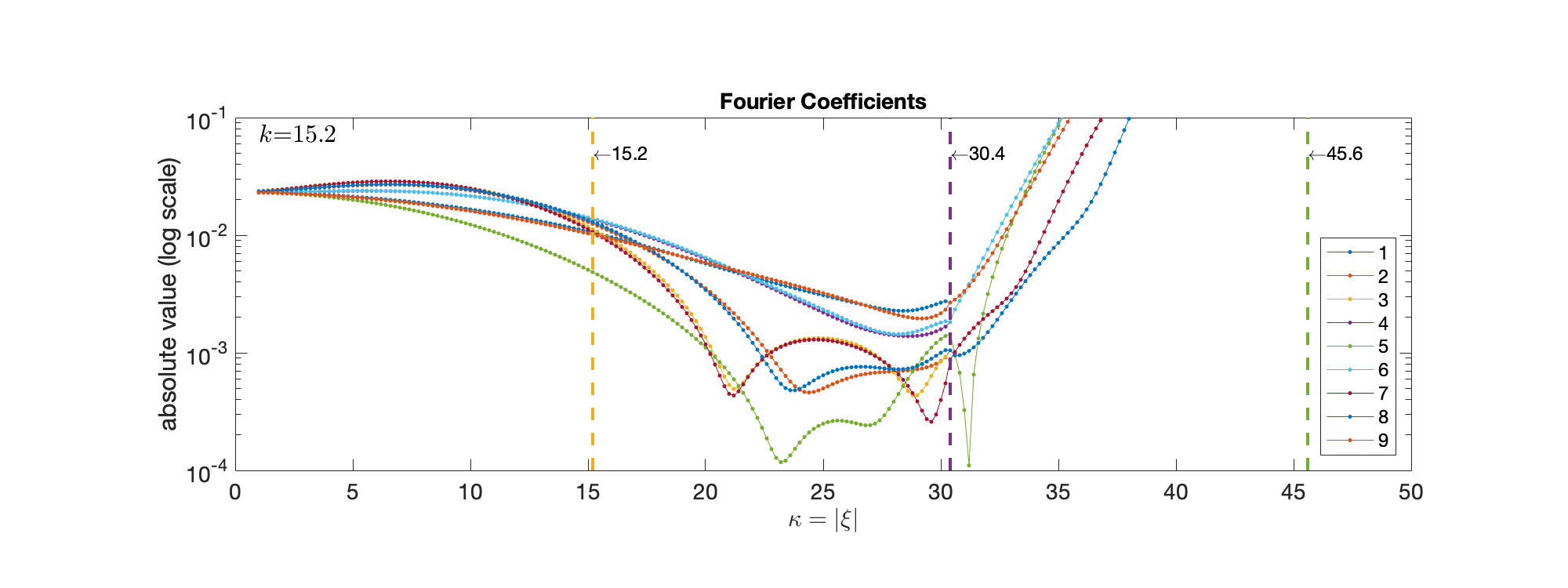}
\caption{Set $k = 15.2$. The recovered Fourier coefficients $\mathcal{F}[c](\xi)$ near the nine slope lines in the phase space.}
\label{fig:1_fourier_k15p2}
\end{figure}

Finally, we implement the inverse Fourier transform to reconstruct the potential function $c$. To obtain such a reconstruction, we need to choose a truncated value $K$ in the phase space such that all the Fourier coefficients with $\kappa \leq K$ shall be used to reconstruct an approximant.
By choosing different truncated value $K = m k$ while $m = 1, 2, 3$, we collect those reconstructed functions $c_{\rm inv}$ in Figure \ref{fig:1_c_inv_k15p2}. More precisely, the dashed lines in Figure \ref{fig:1_fourier_k15p2} highlight those threshold values as $k = 15.2$, $2 k = 30.4$ and $3 k = 45.6$ when $k = 15.2$. The Fourier coefficients $\mathcal{F}[c](\xi)$ with lengths $\kappa = \left| \xi \right|$ smaller than the threshold values, i.e. $\kappa \in (0,m k\,]$, will be used to generate $c_{\rm inv}$ respectively.
As one can observe, we obtain a stable reconstructed potential function $c_{\rm inv}$ with threshold values $k$ and $2 k$ in Figure \ref{fig:1_c_inv_k15p2} (left and middle), because all the Fourier coefficients $\mathcal{F}[c](\xi)$ are rather small for $\kappa \leqslant k$ or $\kappa \leqslant 2k$. The more Fourier coefficients when $\kappa \leqslant 2k$, the better resolution of the reconstructed potential $c_{\rm inv}$.
On the other hand, if we increase the truncated value to $3 k$, we observe some high frequency patterns in the right panel of Figure \ref{fig:1_c_inv_k15p2}, which are also reflected in the (blowing-up) Fourier coefficients when $\kappa > 2k$, see Figure \ref{fig:1_fourier_k15p2}, or referring to the estimate (\ref{eq_nminus1bound}) in the proof of Theorem \ref{thm_1}. In current work, we change different truncated value $K=mk$, $m=1,2,3$ but in real calculation we suggest to choose the truncated value $K=2k$.

\begin{figure}[htbp]
\centering
\includegraphics[width=1.0\textwidth]{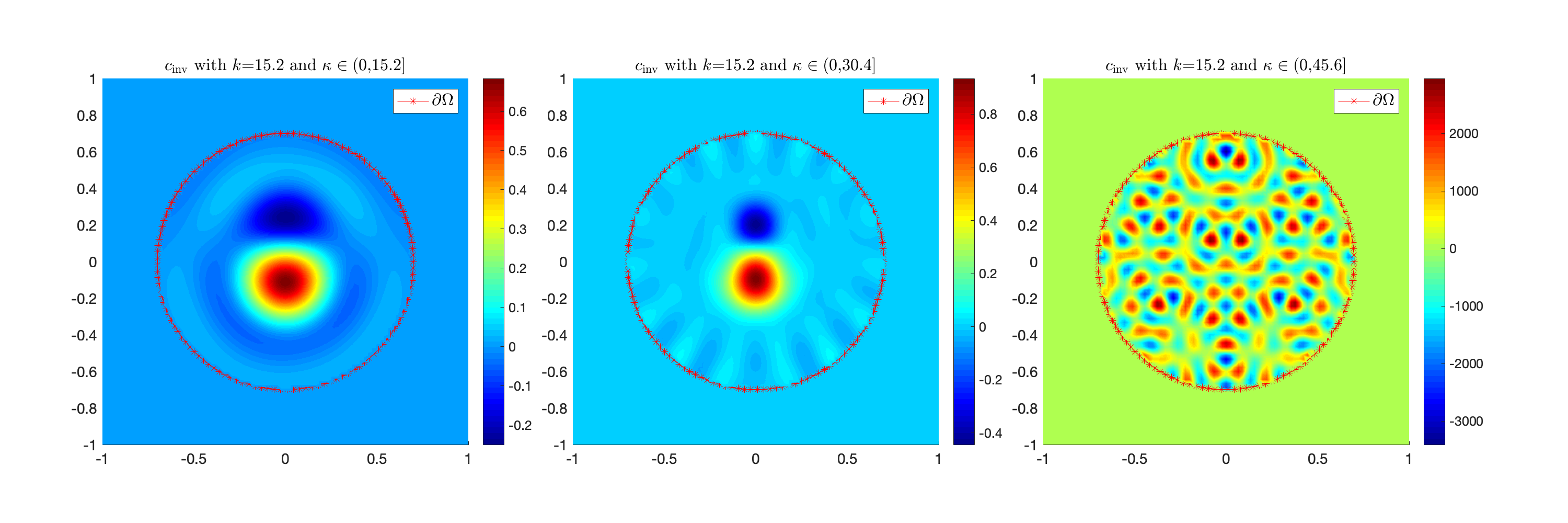}
\caption{Set $k = 15.2$. The reconstructed potential function $c_{\rm inv}$ with different truncated value $K = m k$ and $m = 1,2,3$. Left: $K = 15.2$. Middle: $K = 30.4$. Right: $K = 45.6$.}
\label{fig:1_c_inv_k15p2}
\end{figure}

We shall note that a large value of the wavenumber $k$ allows us to use more (stable) Fourier coefficients in the phase space when $\kappa \leqslant 2k$. To visualize such difference, we presents the reconstructed potential function $c_{\rm inv}$ in Figure \ref{fig:1_c_inv_k5} by choosing $k = 5$ and the truncated values $K = 5, 10, 15$ respectively.

\begin{figure}[htbp]
\centering
\includegraphics[width=1.0\textwidth]{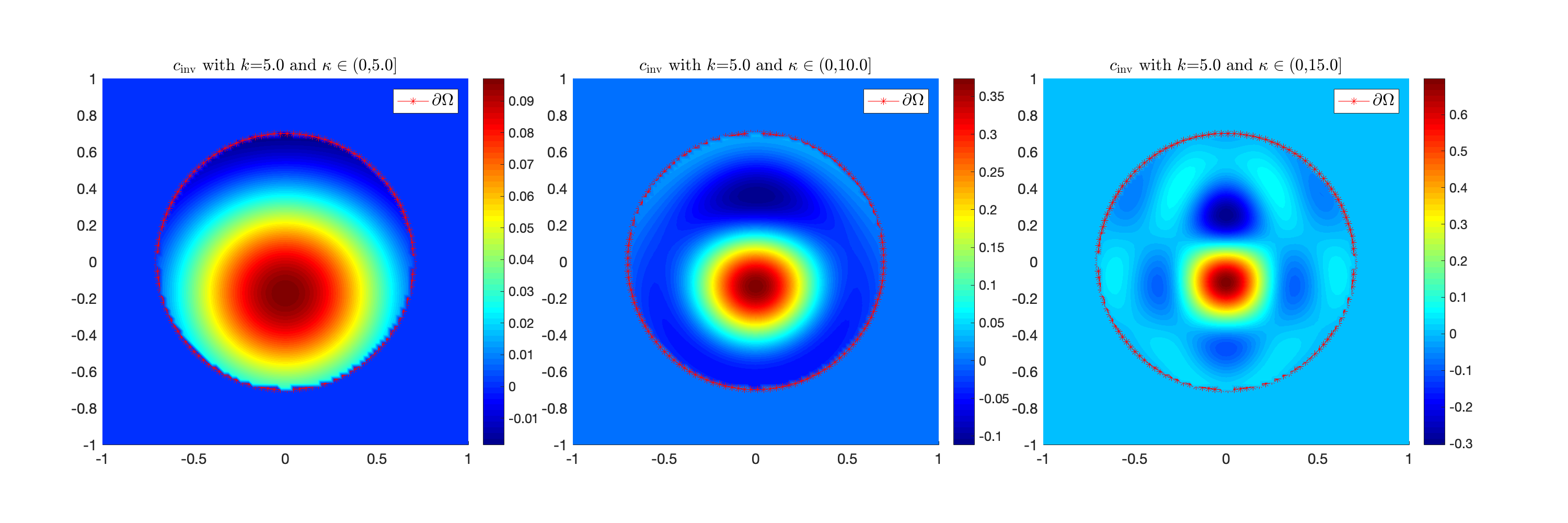}
\caption{Set $k = 5$. The reconstructed potential function $c_{\rm inv}$ with different truncated value $K = m k$ and $m = 1,2,3$. Left: $K = 5$. Middle: $K = 10$. Right: $K = 15$.}
\label{fig:1_c_inv_k5}
\end{figure}

%--------------------------------------------------------------------------------

\subsubsection{CASE 1: Recovery at a large wavenumber near eigenvalues}

In this subsection, we numerically investigate the consequence by choosing a large wavenumber near eigenvalues for a Laplacian operator in the circular domain $\Omega$.

A particular choice of such a large wavenumber is $k = 12.3625$. In Figure \ref{fig:1_c_inv_k12p4}, we present the reconstructed Fourier coefficients and the reconstructed potential function $c_{\rm inv}$ analogously as in the previous subsection by choosing truncated values $K = 2k = 24.7250$. Noticing that the high frequency patterns appear in Figure \ref{fig:1_c_inv_k12p4} (right), we also observe (blowing-up) Fourier coefficients compared with those in Figures \ref{fig:1_fourier_k15p2}.

\begin{figure}[htbp]
\centering
\includegraphics[width=1.0\textwidth]{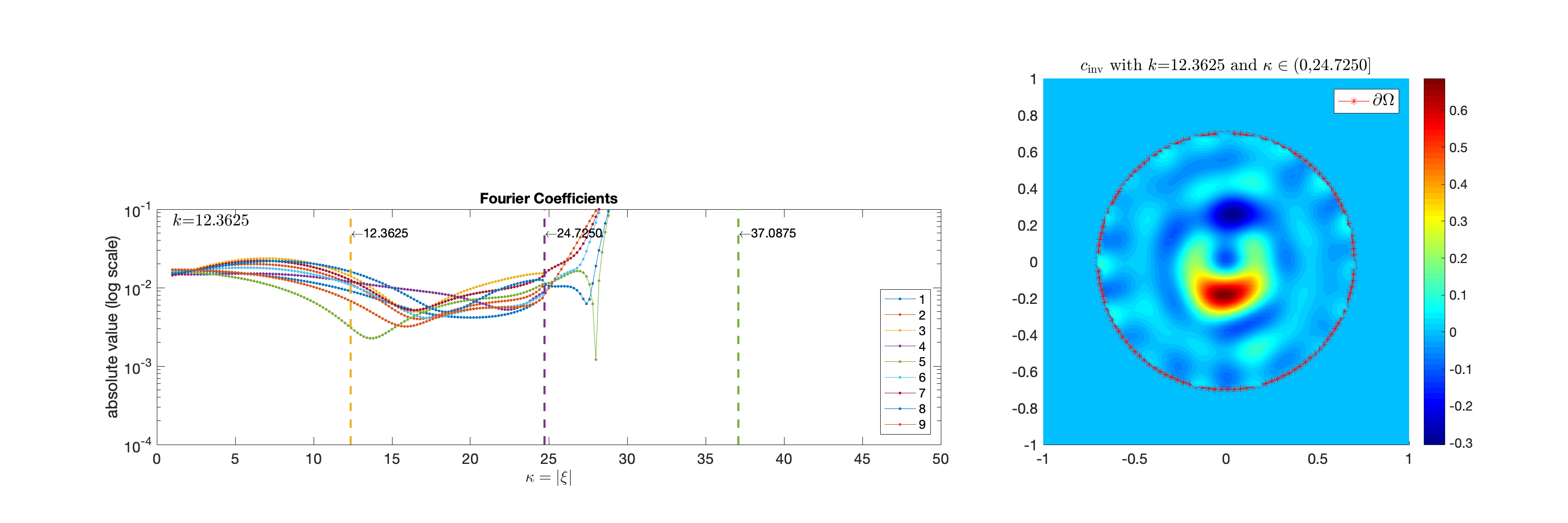}
\caption{Recovery by using the wavenumber $k = 12.3625$. Left: the recovered Fourier coefficients $\mathcal{F}[c](\xi)$ near the nine slope lines in the phase space. Right: the reconstructed potential function $c_{\rm inv}$ with truncated value $K = 24.7250$.}
\label{fig:1_c_inv_k12p4}
\end{figure}

%--------------------------------------------------------------------------------

\subsubsection{CASE 1: Finer grids and less Fourier modes}\label{subse_4.1.3}

We further focus on another two numerical aspects enhancing or weakening the resolution of the reconstructed potential function $c_{\rm inv}$.

The first aspect is the grid of the forward problem. In the above two subsections, we choose a $100 \times 100$ equal-distance points grid in $[-1, 1]^{2}$. The chosen grid may capture some solution patterns if the wavenumber $k$ is small. But when the wavenumber $k$ increases, for instance from $5$ to $15.2$, the high frequency patterns induced by large wavenumbers may not be accurately demonstrated in the grid. To weaken such difficulty, we consider a finer $200 \times 200$ equal-distance points grid in $[-1, 1]^{2}$. By implementing the same approach in the above subsection, we present the Fourier coefficients and their reconstructed potential function $c_{\rm inv}$ in Figure \ref{fig:1_c_inv_k15p2_finergrid} for $k = 15.2$ again. As one can observe, the Fourier coefficients of the finer grid become more accurate in the large length interval ($\kappa \in [20,30]$) than those of the original grid. The reconstructed potential functions $c_{\rm inv}$ also enhance the resolution (in the peak pattern) if we choose the truncated value $K = 2 k = 30.4$, see Figure \ref{fig:1_c_inv_k15p2_finergrid} (right).

\begin{figure}[htbp]
\centering
\includegraphics[width=1.0\textwidth]{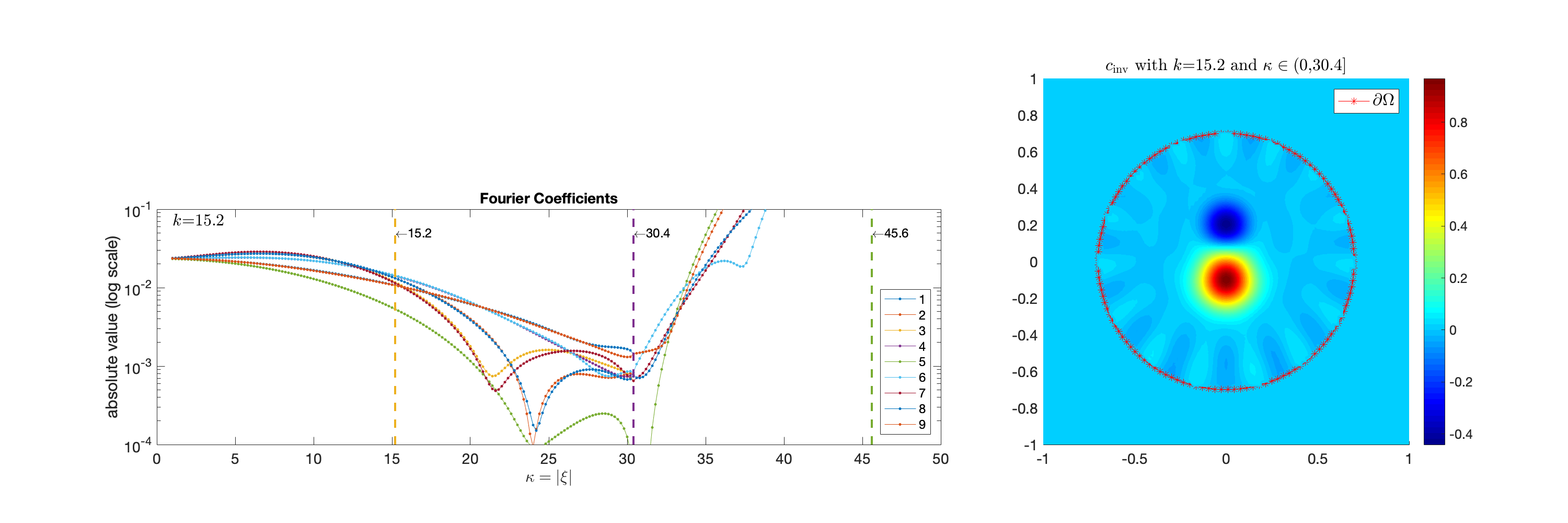}
\caption{Finer grids. Set $k = 15.2$. Left: the recovered Fourier coefficients $\mathcal{F}[c](\xi)$. Right: the reconstructed potential function $c_{\rm inv}$ with truncated value $K = 30.4$.}
\label{fig:1_c_inv_k15p2_finergrid}
\end{figure}

We also investigate the influence of the observation angles, i.e. the limited-angle slope lines in the upper right panel of Figure \ref{fig:1_potential}. In Figure \ref{fig:1_c_inv_k15p2_finergrid_phase}, we show the recovered potential functions with $2$, $3$, $7$ different slope lines by choosing the truncated value $K = 2 k$ and $k = 15.2$. Compared with the result in Figure \ref{fig:1_c_inv_k15p2_finergrid}, we observe that the more slope lines, the better accurately recovered potential functions, because of the gained Fourier coefficients. In principle, one can include more angle slope lines to obtain better resolution.

\begin{figure}[htbp]
\centering
\includegraphics[width=1.0\textwidth]{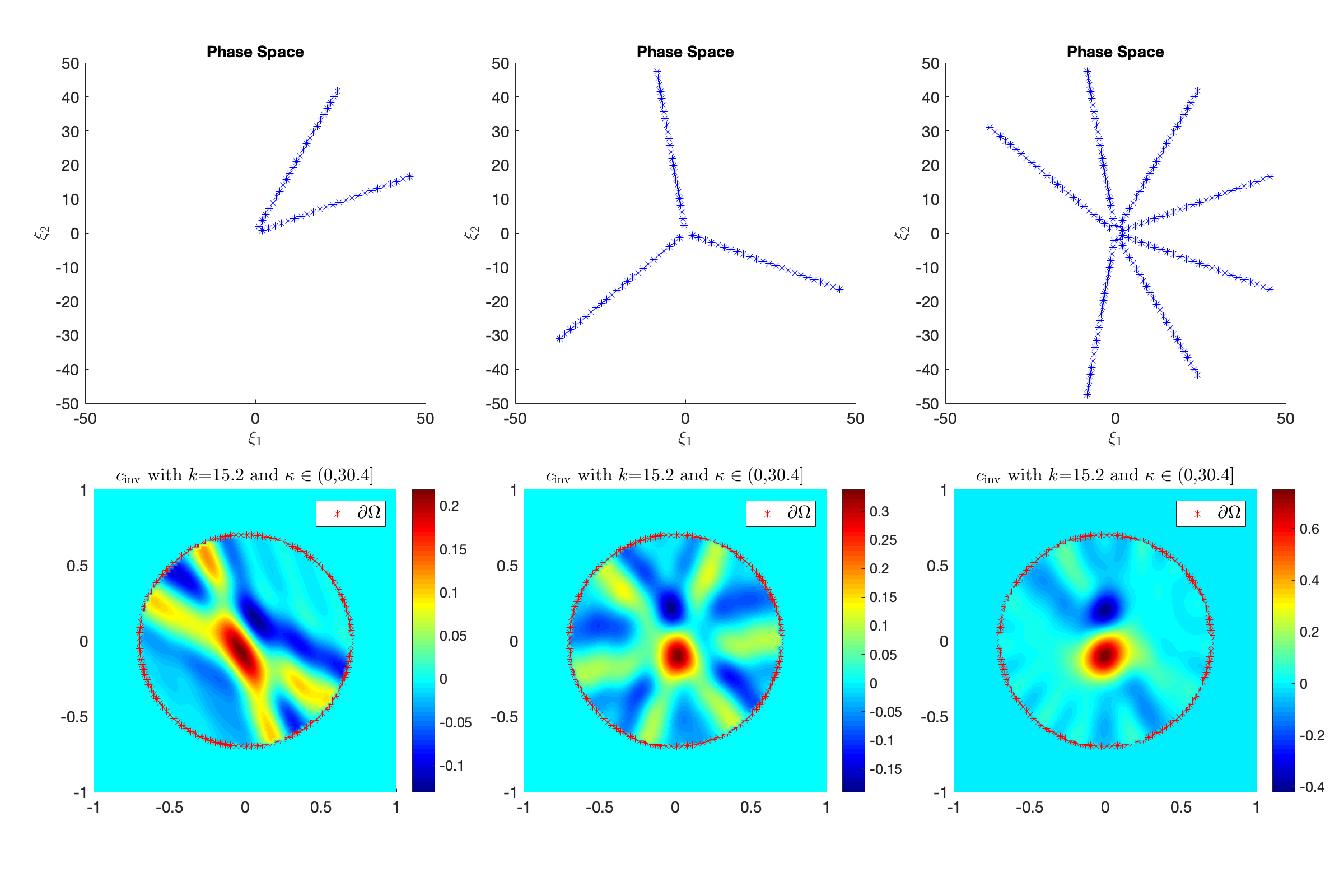}
\caption{The reconstructed potential function $c_{\rm inv}$ with limited-anlges. Set $k = 15.2$.}
\label{fig:1_c_inv_k15p2_finergrid_phase}
\end{figure}

%--------------------------------------------------------------------------------

\subsection{Another numerical example}

Another example considers a complicate potential function $c$ in Figure \ref{fig:2} (left) with the same circular domain $\Omega$. Similar to the previous example, the same set of the sampling points $\xi$ in the phase space is utilized referring to the upper right panel of Figure \ref{fig:1_potential}. For the sake of simplicity, we skip all the discussion on the reconstruction algorithm but provide the reconstructed potential function $c_{\rm inv}$ with the truncated value $K=2 k$ and $k=5.0$, $k = 20.0$ respectively in Figure \ref{fig:2} (middle \& right). Similarly, we also have chosen a finer $200 \times 200$ equal-distance points grid in the domain $[-1, 1]^{2}$ for the forward problem. Comparing the reconstructed potential functions $c_{\rm inv}$ for $k=5$ and $k=20$, we clearly visualize the improved resolution at the large wavenumber.

\begin{figure}[htbp]
\centering
\includegraphics[width=1.0\textwidth]{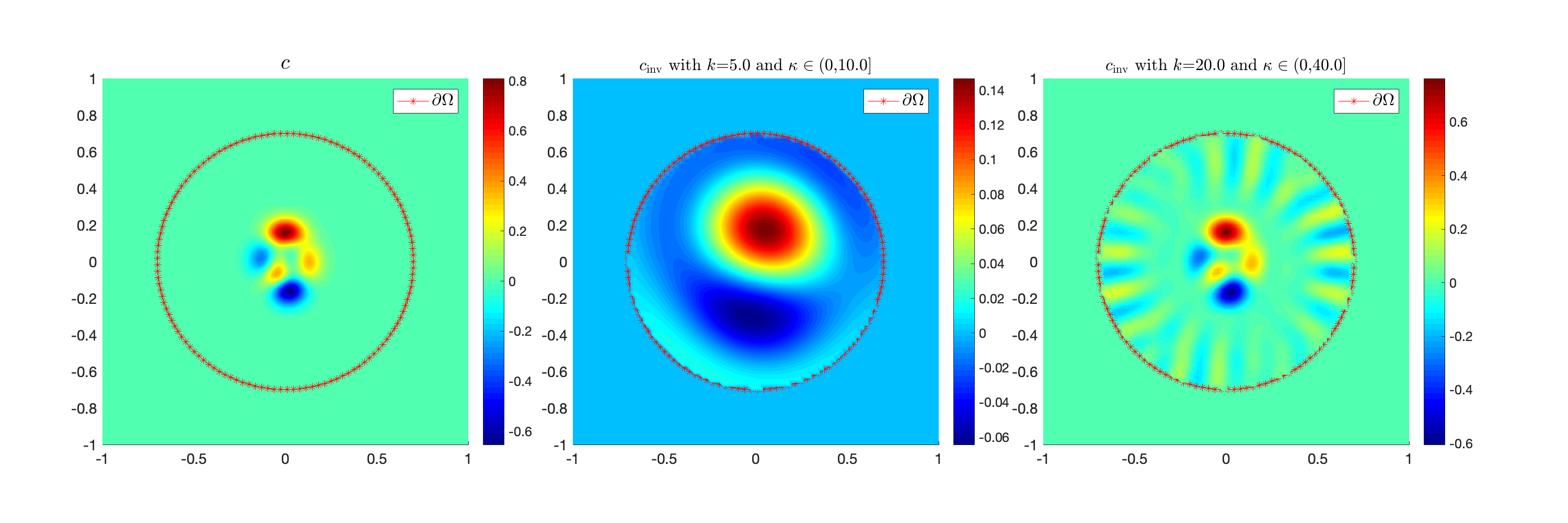}
\caption{Left: The true potential function $c$. Middle and right: its reconstruction $c_{\rm inv}$ with finer grid, and the chosen truncated wavenumber is $2 k$ while $k = 5.0$ and $20.0$, respectively.}
\label{fig:2}
\end{figure}
%--------------------------------------------------------------------------------

%--------------------------------------------------------------------------------

\section{Conclusion}\label{se_con}
The original inverse problem for the Schr\"{o}dinger potential is ill-posed and nonlinear (moreover, non-convex). To study stability we prefer to focus on the most serious difficulty: ill-conditioning and we linearized inverse problem to avoid additional difficulties with multiple local minima. A  numerical solution of the linearized problem is much faster and more reliable and is quite satisfactory in many applications.

We demonstrated that recovery of the Schr\"{o}dinger potential from all boundary data at a fixed energy/wavenumber is dramatically improving for a larger $k$. There is no doubt that this improvement will be more significant when finding more complicated $c$ or using a complete nonlinear problem.

These results promise a better numerical reconstruction of the conductivity coefficient in the stationary Maxwell system at higher wavenumbers, as predicted analytically in \cite{ILW2016}. We intend to work on the reconstruction at least in the linearized version in near future. There are good expectations that such numerical results will be a solid base for a serious improvement in the electrical impedance tomography which now suffers from a very low resolution. As known, this type of tomography has many geophysical and medical applications.

%%\cite{R2010}
\bibliographystyle{amsplain}

\end{document}